\newcommand\cyr{%
\renewcommand\rmdefault{wncyr}%
\renewcommand\sfdefault{wncyss}%
\renewcommand\encodingdefault{OT2}%
\normalfont
\selectfont}
\DeclareTextFontCommand{\textcyr}{\cyr}
\DeclareMathAlphabet{\zap}{OT1}{pzc}{m}{it}
\newcommand{\Dye}{\Delta}
\newcommand{\pull}{\mbox{\cyr   sh}}
\newcommand{\CC}{\mathbb{C}}
\newcommand{\CP}{\mathbb C\mathbb P}
\newcommand{\RP}{\mathbb R\mathbb P}
\newcommand{\RR}{\mathbb R}
\newcommand{\ZZ}{\mathbb{Z}}
\newcommand{\ind}{\operatorname{ind}}
\newcommand{\push}{{\textcyr c}}
\newtheorem{thm}{Theorem}
\newtheorem{prop}{Proposition}
\newtheorem{lem}{Lemma}
\theoremstyle{definition}
\newtheorem*{defn}{Definition}
\theoremstyle{remark}
\newtheorem*{remark}{Remark}
\newtheorem*{acknow}{Acknowledgment}
\begin{document}
\title{Zoll Metrics, Branched Covers, and 
Holomorphic Disks}
\author{Claude LeBrun\thanks{Supported 
in part by  NSF grant DMS-0905159.} ~and
L.J. Mason\thanks{Supported in part by  FP6
Marie Curie RTN {\em ENIGMA} (MRTN-CT-2004-5652).}
}
\date{February 11, 2010}

\maketitle 
\begin{abstract}
We   strengthen our previous results \cite{lmzoll} regarding 
the moduli spaces  of Zoll metrics and Zoll projective structures 
on  $S^2$. In particular, we describe a concrete, open  condition 
which suffices to guarantee that a   
 totally real embedding  $\RP^2\hookrightarrow \CP_2$ 
 arises  from a unique Zoll projective structure on the $2$-sphere. 
Our methods ultimately reflect the special role such structures play  in the 
initial value problem for 
 the $3$-dimensional Lorentzian Einstein-Weyl equations. 
\end{abstract}

\bigskip

A  {\em Zoll metric} on a smooth manifold $M$  is a  Riemannian
metric $g$   whose geodesics are all simple closed curves of equal length. 
This terminology commemorates the  fundamental contribution of  Otto Zoll \cite{zoll}, 
who exhibited  an infinite-dimensional  family  of  such metrics   on 
$M=S^2$. It is easy to prove    \cite{beszoll} that a manifold admitting Zoll
metrics is compact and has finite fundamental group,
so the only two-dimensional candidates for $M$ are $S^2$ and $\RP^2$; 
conversely,  the standard metrics   on both of these surfaces are obviously 
Zoll.  However, 
  Green's proof \cite{grezoll} of the Blaschke conjecture
    shows that, after rescaling,  every  Zoll metric on $\RP^2$
  is actually a pull-back of the  standard one  via  
  some diffeomorphism.  By contrast, Zoll's examples show that 
   the situation for the $2$-sphere  
is fundamentally different. Indeed, 
in the decade following  Zoll's work, 
Funk \cite{funk}  gave a formal-power-series argument
indicating that, modulo isometries and rescalings, 
 the general  Zoll perturbation of the standard metric
on $S^2$  depends on  one {odd} function 
$f: S^2 \to \RR$. However, a rigorous proof of Funk's conjectural picture 
was only found half a century later, when  Victor Guillemin \cite{guillzoll}
brought the power of 
Nash-Moser 
 implicit function theorems to bear on the problem.

More recently, 
twistor techniques have given us   new insights into 
global aspects of the problem. Indeed, the present authors  have elsewhere
shown   \cite{lmzoll}  that  
Zoll surfaces can  in principle
be completely understood in terms of  families of holomorphic disks in $\CP_2$. 
These same techniques are also  naturally adapted to the study 
of more general {\em Zoll projective structures}.  Recall that a
 projective structure is by definition 
an equivalence class $[\nabla ]$ of 
affine connections $\nabla$ on a manifold $M$, where two
connections are declared to be equivalent iff they have
the same geodesics,   considered  as {unparameterized} curves. 
A projective structure is said to be Zoll iff its  geodesics (again, 
as unparameterized curves)  are all embedded circles. It can
then be shown   \cite{grogro,lmzoll} that a  Riemannian metric $g$ 
on a compact surface $M$ is   Zoll  iff 
the equivalence class  $[\nabla ]$ of  its Levi-Civita 
connection   is a Zoll projective structure. 
A compact surface $M$ can admit a Zoll projective structure $[\nabla ]$ 
iff it is diffeomorphic to $S^2$ or $\RP^2$; and,  as in the Riemannian case, 
any Zoll projective structure on $\RP^2$ is actually the standard one, 
pulled back via some self-diffeomorphism of $\RP^2$. Our proof  of 
this last assertion  \cite{lmzoll} hinged on  the fact  that the complex structure of 
$\CP_2$ is unique \cite{yau} up to biholomorphism.

We now summarize our previous results \cite{lmzoll} regarding the 
 the case of  $M=S^2$. 
Given a smooth Zoll projective structure $[\nabla ]$ on $M$, its space of 
unoriented 
geodesics $N\approx \RP^2$ has a natural
embedding in $\CP_2$ as
a totally real submanifold, in  a manner which is completely determined 
up to a projective linear transformation;  for example, the usual 
projective structure induced by  the  standard 
``round''  metric 
corresponds to  a ``real linear'' embedding $\RP^2\hookrightarrow \CP_2$.
Each point $x\in M$ determines an embedded holomorphic
disk $\Delta_x\subset\CP_2$ with $\partial \Delta_x\subset N$,
and the 
 relative homology class $[\Delta_x]$ of any such  disk 
generates
$H_2 (\CP_2, N; \ZZ)\approx \ZZ$. 
These disks meet $N$ only along their boundaries, and their interiors
foliate $\CP_2- N$. The family of disks $\Delta_x$  moreover sweeps out an entire
connected component  in the moduli space of holomorphic disks 
$(D^2, \partial D^2) \to (\CP_2, N)$. If the family of disks $\{ \Delta_x~|~x\in M\}$  is 
known, the projective structure $[\nabla ]$ can then be completely reconstructed;
namely, given a point $z\in N$, the set
$${\mathfrak C}_z = \{ x\in M~|~ z\in \partial \Delta_z \}$$
is a geodesic of $[\nabla ]$, and every geodesic arises in this way. 

The construction  proceeds by first creating an abstract
complex surface,  and then showing that it must be biholomorphic to $\CP_2$. 
In the  process, the bundle of orientation-compatible almost-complex structures over 
$M=S^2$ is identified with the   complement $\CP_2 - N$ of
the relevant totally real submanifold $N$. 
If there is an orientation-compatible complex structure ${\zap J}$ on 
$M$ which is parallel with respect to some  torsion-free  connection  $\triangledown \in [\nabla ]$,
then the graph of ${\zap J}$ 
becomes  a  holomorphic curve ${\mathcal Q}\subset \CP_2 -N$. 
For homological  reasons, this curve must  be a non-singular conic, 
and so may be put in the standard form 
\begin{equation}
\label{conic}
z_1^2 + z_2^2 + z_3^2 =0
\end{equation}
by making a suitable choice of homogeneous coordinates on $\CP_2$.  
Notice that this happens precisely when there is a conformal
structure $[g]$ on $M$ for which  $\triangledown$
is a compatible Weyl connection. 
If there is actually a Zoll metric $g$
with   Levi-Civita connection $\triangledown \in [\nabla ]$, 
then 
the totally real submanifold $N\subset \CP_2$ is moreover  
{\em Lagrangian} with respect to the sign-ambiguous 
symplectic form $\Omega = \Im m ~\Upsilon$ on $\CP_2 - {\mathcal Q}$, where 
\begin{equation}
\label{oops}
 \Upsilon = \pm ~
\frac{z_1 ~dz_2\wedge dz_3 + z_2 ~dz_3 \wedge dz_1 + z_3~ dz_1\wedge dz_2}{
{\sqrt{(z_1^2 + z_2^2 + z_3^2)^3}}
} ~ . 
\end{equation}

In the converse direction, one would like to assert that the totally
real submanifold $N\subset \CP_2$ can be chosen essentially arbitrarily, and 
that each such choice uniquely determines a Zoll projective structure $[\nabla ]$
on $M=S^2$. But while  our previous results in this direction may have
been  conceptually suggestive, they were technically  crude  in 
 important respects. Indeed, 
using an elementary 
inverse-function theorem argument, 
we merely showed in \cite{lmzoll} that every  $N\subset \CP_2$ which is $C^{2k+5}$ 
close to the standard ``real linear'' $\RP^2$ in the  topology actually arises from a 
$C^k$ Zoll projective structure $[\nabla ]$, and that this projective structure is unique
among those that are   close to the standard ``round'' projective
structure.  By contrast, the rest of the story was quite clean; the choice of a  
reference conic  ${\mathcal Q}\subset \CP_2$
disjoint from such an $N$ then gives rise to a conformal structure $[g]$ on $M=S^2$
for which the Zoll  projective structure $[\nabla ]$ is represented by a unique $[g]$-compatible 
Weyl connection $\triangledown \in [\nabla ]$, and this Weyl connection 
is the Levi-Civita connection of a Zoll metric $g\in [g]$ iff $N$
is Lagrangian with respect to  the sign-ambigious symplectic form $\Omega$. 
Still, it must be admitted that 
 our previous results  remain esthetically unsatisfactory in two  essential ways:
we neither provided an effective  condition on $N\subset \CP_2$ sufficient for the existence
of an associated  family of holomorphic disks, nor  proved the 
 uniqueness of this family when it  does exist.

The present article will address these issues 
by proving  global existence and
uniqueness results for 
 holomorphic disks; see  Theorems \ref{snap}, \ref{crackle}, and \ref{pop} below. 
For the sake of  clarity, our discussion is set  
almost entirely 
 in  the smooth
($C^\infty$) context. While our present methods certainly afford us this luxury, 
the interested  reader may nonetheless  wish to verify   that most of our arguments
can in fact  be carried out with much less regularity. 
The reader may also find it interesting to compare and
contrast our uniqueness results with the rather different ones
found in \cite{rochon}. 

We now begin by  fixing  the standard non-singular conic $\mathcal Q\subset \CP_2$  given by 
(\ref{conic}). 
Of course, any two non-singular conics are actually projectively equivalent, 
but our  conventional choice of $\mathcal Q$  has the nice additional  feature  that  it 
is manifestly invariant under an anti-holomorphic involution
\begin{eqnarray}
{\mathfrak c}: \CP_2 & \longrightarrow & \CP_2 \label{conjugate} 
 \\
~[  z_1 ,   z_2 ,   z_3]  
&\longmapsto   & [\bar{z}_1:\bar{z}_2:\bar{z}_3] \nonumber 
\end{eqnarray}
whose fixed-point set is disjoint from $\mathcal Q$.  This fixed-point set will 
henceforth be called  the {\em standard}  $\RP^2 \subset \CP_2$. 

Next,  notice that a  projective line ${\mathcal A} \subset \CP_2$ is  tangent to ${\mathcal Q}$
iff it is given by 
$$a_1 z_1 + a_2 z_2 + a_3 z_3 =0$$
for an element $[a_1:a_2:a_3]$ of the dual projective plane $\CP_2^*$  satisfying
\begin{equation}
\label{dual}
a_1^2 + a_2^2 + a_3^2 =0.
\end{equation}
When this happens,   the point of tangency is then  given by 
$$[z_1:z_2:z_3]= [a_1:a_2:a_3].$$ 
Also notice  that if 
$p= [p_1:p_2:p_3]$ belongs to the complement of $ {\mathcal Q}$ in $\CP_2 $, 
there  are always exactly two tangent lines of $\mathcal Q$ which pass through $p$;
 indeed,  the incidence equation 
$$a_1 p_1 + a_2 p_2 + a_3 p_3 =0$$
for  $[a_1:a_2:a_3]$  describes a line in the dual projective plane $\CP_2^*$
which is {\em not}  tangent to the dual conic (\ref{dual}), and which  therefore, 
by  B\'ezout's theorem, 
must  meet
the  dual conic  in   two distinct points.

The standard $\RP^2 \subset \CP_2$ is an example of 
a totally real submanifold. If $Z$ is a complex manifold with 
integrable almost-complex structure $J$, 
 recall that a 
differentiable  submanifold $S\subset Z$  is said to be (maximally)   {\em totally
real} if $TZ|_S = TS \oplus J(TS)$.
We  now introduce a  special  class of totally real surfaces
in $\CP_2$ that  will
be central  to our discussion. 

\begin{defn} A   compact  connected smoothly  embedded 
$2$-manifold $N \subset \CP_2$ will be called a  {\em docile surface}
if
\begin{itemize}
\item $N$ is a totally real  submanifold of $ \CP_2$;
\item $N$ is disjoint from the conic ${\mathcal Q}$ defined by (\ref{conic});
and 
\item   $N$ is transverse to each   tangent projective line (\ref{dual}) 
of  the conic ${\mathcal Q}$. 
\end{itemize}
\end{defn}

For example, the standard $\RP^2 \subset \CP_2$ is docile. Indeed, the
transversality condition is satisfied in this case because  a projective line is tangent to 
this standard $\RP^2$ iff it is invariant under complex conjugation, whereas the 
two tangent lines  of  $\mathcal Q$ which pass through a real point 
$p \in \RP^2$ are, by contrast,   interchanged by  the anti-holomorphic 
involution  $\mathfrak c$ defined above in (\ref{conjugate}). 

Since the condition of docility is obviously open in the $C^1$ topology, 
any  small perturbation of the standard $\RP^2$ will also be a 
docile surface. In the converse direction, we have the following result: 

\begin{lem}
\label{primo} 
Let $N\subset \CP_2$ be a docile surface. Then $N$
is diffeomorphic to $\RP^2$, and is isotopic
 to the standard  $\RP^2\subset \CP_2$
 through a family of 
other docile surfaces. 
\end{lem}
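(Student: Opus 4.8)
The plan is to use the tangent lines of $\mathcal{Q}$ to set up a covering/projection argument that pins down both the diffeomorphism type of $N$ and the isotopy. Here is the key geometric observation I would exploit first. By the discussion preceding the definition, each point $p\in\CP_2-\mathcal{Q}$ determines exactly two tangent lines of $\mathcal{Q}$ through $p$, equivalently two points on the dual conic (\ref{dual}); since the dual conic is abstractly a $\CP_1$, this sets up a natural double cover of $\CP_2-\mathcal{Q}$ by the incidence data. Concretely, I would consider the map sending $p\in\CP_2-\mathcal{Q}$ to the unordered pair $\{$two tangent lines$\}$, and its unramified double cover recording an \emph{ordered} pair. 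The transversality clause in the definition of docility is precisely what guarantees that restricting this structure to $N$ behaves well: because $N$ meets each tangent line transversally and misses $\mathcal{Q}$, the induced map from $N$ to the space of tangent lines (the dual conic, a $2$-sphere) is an immersion with controllable fibers.

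First I would make this precise by building the correct Gauss-type map $\gamma\colon N\to \mathcal{Q}^*\cong S^2$, where $\mathcal{Q}^*$ is the dual conic, defined so that the two tangent lines through a point give two points of $S^2$; transversality of $N$ to every tangent line forces $\gamma$ (or its appropriate lift) to be a local diffeomorphism, hence a covering map since $N$ is compact. Second, I would analyze the deck behavior. The anti-holomorphic involution $\mathfrak c$ of (\ref{conjugate}) interchanges the two tangent lines through any point of the standard $\RP^2$, as the excerpt already notes; the docility hypotheses should let me argue that $N$ inherits a free involution compatible with the order-swapping on the cover, and that the quotient is $S^2$. This would identify $N$ as a quotient of $S^2$ by a free orientation-reversing involution, i.e.\ $N\cong\RP^2$. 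Third, for the isotopy statement, I would argue that the space of docile surfaces is connected by showing that the covering-map / transversality data depend continuously on $N$ and are stable: starting from the standard $\RP^2$, one can deform $N$ back to it through surfaces that retain transversality to all tangent lines and disjointness from $\mathcal{Q}$, since docility is an open condition (as already observed) and the obstruction to the isotopy is measured by a discrete, hence constant, invariant that agrees with that of the standard $\RP^2$.

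The main obstacle I anticipate is the second step: cleanly establishing that the map to the dual conic is a genuine covering of the correct degree and extracting the free involution whose quotient is $S^2$. The subtlety is that the ``two tangent lines'' assignment is only canonically a double cover of $\CP_2-\mathcal{Q}$, and I must verify that this double cover, when pulled back to $N$, is connected (or determine its components) and that the covering degree onto $S^2$ is exactly one on each ordered sheet. Ruling out higher-degree covers or disconnected models requires a homological or degree computation: I would compute the degree of $\gamma$ using the known intersection data, likely invoking that $[\Delta_x]$ generates $H_2(\CP_2,N;\ZZ)\approx\ZZ$ and that the boundary circles $\partial\Delta_x$ sweep $N$ once. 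Tying the covering degree to this intersection number is where the real work lies; once the degree is shown to be $1$, the conclusion $N\cong\RP^2$ and the openness-plus-discreteness argument for the isotopy should follow without further difficulty.
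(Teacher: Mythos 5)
Your geometric setup is in substance the same as the paper's, just phrased dually: the space of \emph{ordered} pairs of tangent lines through points of $\CP_2-\mathcal{Q}$ is canonically $(\CP_1\times\CP_1)-\mathcal{D}$, where $\Pi:\CP_1\times\CP_1\to\CP_2$ is the double cover branched over $\mathcal{Q}$ and $\mathcal{D}$ is the diagonal; your Gauss map $\gamma$ is the factor projection, and your covering-map observation is exactly the paper's first step. The genuine gap lies in the step you yourself flag as ``the real work'': determining the components of the pulled-back cover $\tilde{N}=\Pi^{-1}(N)$. Your proposed resolution --- invoking the disks $\Delta_x$ and the fact that $[\Delta_x]$ generates $H_2(\CP_2,N;\ZZ)\cong\ZZ$ --- is circular. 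For an arbitrary docile surface no family of disks is yet available: producing it is precisely the content of Theorems \ref{snap} and \ref{crackle}, whose proofs rest on Lemma \ref{primo}; even the computation $H_2(\CP_2,N)\cong\ZZ$ is Lemma \ref{secondo}, proved afterwards using Lemma \ref{primo}. (The disk facts quoted in the introduction hold only for surfaces $N$ that already arise from a Zoll projective structure, which is what one is trying to establish.) The paper's argument is instead elementary and self-contained: since $\CP_1$ is simply connected, each component of $\tilde{N}$ is the graph of a diffeomorphism $\varphi$ of $\CP_1$ --- so ``higher-degree covers'' are automatically excluded, not something to be ruled out homologically --- and disjointness of $\tilde{N}$ from $\mathcal{D}$ (i.e.\ $N\cap\mathcal{Q}=\varnothing$) makes each such $\varphi$ fixed-point-free, hence homotopic to the antipodal map and of degree $-1$. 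Two disjoint graphs would then make $\varphi_2^{-1}\circ\varphi_1$ fixed-point-free, hence of degree $-1$, contradicting $\deg(\varphi_2^{-1}\circ\varphi_1)=(-1)^2=+1$. That short argument is the whole connectedness proof, and nothing from the disk theory is needed.

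The isotopy claim also lacks a supporting argument. Openness of docility plus an unnamed ``discrete, hence constant, invariant'' does not produce a path: openness only says that surfaces near a docile one are docile; it gives no way to join two distant docile surfaces, and you never identify the invariant or explain why its constancy would yield an isotopy. What the paper actually does is parameterize \emph{all} docile surfaces: the $\varrho$-invariance of $\tilde{N}$ under $(u,v)\mapsto(v,u)$ --- a symmetry your sketch never uses --- forces $\varphi=\varphi^{-1}$; the quotient identification $N\cong\CP_1/\langle\varphi\rangle$, a surface with fundamental group $\ZZ_2$, gives $N\cong\RP^2$; lifting a diffeomorphism of quotients gives $\varphi=\psi\circ{\mathfrak a}\circ\psi^{-1}$ for some orientation-preserving diffeomorphism $\psi$ of $\CP_1$; and conversely every such $\psi$ yields a docile surface. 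Connectedness of the group of orientation-preserving diffeomorphisms of $\CP_1$ (it acts transitively on conformal metrics, with connected isotropy $PSL(2,\CC)$) then shows the space of docile surfaces is connected, which is the isotopy statement. Without this explicit parameterization, your third step has no content.
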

\begin{proof}
The argument will proceed by systematically exploiting    the map 
\begin{eqnarray*}
\Pi: \CP_1 \times \CP_1 &\to& \CP_2\\
([u_1:u_2], [v_1:v_2])&\mapsto&
[i(u_1v_1+u_2v_2):u_1v_1-u_2v_2:u_1v_2+u_2v_1]~,
\end{eqnarray*}
which  is a $2$-to-$1$ branched cover, ramified over the conic $\mathcal Q$. Indeed, 
notice that 
the diagonal $\mathcal D$, explicitly given   by 
$u_1v_2 - v_1 u_2 =0$,  is sent  bijectively to the conic $z_1^2+z_2^2+z_3^2=0$. 
Moreover,  each factor line $u= \mbox{const}$ or $v= \mbox{const}$ is
sent isomorphically to a tangent line $\mathcal A$ of $\mathcal Q$, and 
every tangent line conversely occurs in each of these families. 
Since $\Pi$ has degree two, and since exactly two tangent lines pass
through each  $p\in \CP_2 - {\mathcal Q}$, it  follows that
$\Pi$ is  actually unramified away from ${\mathcal Q}$. 
We note in passing   that the 
``anti-diagonal''  $\overline{\mathcal D}$, defined by
$[u_1:  u_2 ] = [ -\bar{v}_2:\bar{v}_1]$,  is therefore  a $2$-to-$1$  cover of the 
standard $\RP^2 \subset \CP_2$ defined  by   $z_j=\bar{z}_j$, $j=1,2,3$. 

Now suppose that  $N\subset \CP_2$ is a docile surface, and let 
$\tilde{N}=
\Pi^{-1}(N)$ be its pre-image in $\CP_1 \times \CP_1$. 
Since $N \cap {\mathcal Q}= \varnothing$,  it
follows that $\tilde{N}$ is a smooth compact surface in $\CP_1 \times \CP_1$,
and the induced map
$\tilde{N} \to N$  
is a  two-to-one submersion. 
In particular, $\tilde{N}$ has at most two connected components. 
On the other hand, the transversality hypothesis guarantees that 
$\tilde{N}$ is transverse to each factor line  $u= \mbox{const}$ or $v= \mbox{const}$,
so each factor  projection 
$$\varpi_j|_{\tilde{N}}: \tilde{N} \to \CP_1,~~~j= 1, 2,$$
 is  a submersion, and hence  a covering map on each connected component. 
 Since $\CP_1$ is simply connected, 
  both projections are therefore diffeomorphisms on each connected component, 
  and each component of $\tilde{N}$ is therefore the graph of
  some diffeomorphism $\varphi : \CP_1 \to \CP_1$. Moreover, since 
  $\tilde{N}\cap {\mathcal D}= \varnothing$,  the relevant 
  $\varphi$ cannot have fixed points, and is therefore  homotopic to the 
  antipodal map 
 \begin{eqnarray*}
{\mathfrak a} : \CP_1 &\longrightarrow & \CP_1 \\
  ~[ z_1 : z_2 ] & \longmapsto & [ - \bar{z}_2 : \bar{z}_1 ]
\end{eqnarray*}
  via the familiar  geometric construction of pushing $\varphi (z)$ away from 
  $z$ along  great circles. Consequently, each such diffeomorphism $\varphi$ has 
  degree $-1$, and so is  orientation-reversing. 
  
  Thus,  if $\tilde{N}$
  were disconnected, it would  have to  be 
  the   union of two disjoint  graphs of orientation-reversing diffeomorphisms
  $\varphi_1, \varphi_2 : \CP_1 \to \CP_1$.  Since these two graphs would 
  be disjoint, we would have   $\varphi_1(u) \neq \varphi_2(u)$,
  and hence $(\varphi_2^{-1}\circ \varphi_1) (u) \neq u$,  
  for all $u \in \CP_1$. Thus 
    $\varphi_2^{-1}\circ \varphi_1$  would   be fixed-point-free, and hence   would also have 
    have degree $-1$. But since 
  $$\deg   (\varphi_2^{-1}\circ \varphi_1) = (\deg \varphi_2)^{-1}(\deg \varphi_1) = (-1)^2 = + 1,$$ 
 this is  clearly a contradiction. 
  
 Hence  $\tilde{N}$ is  connected, and  therefore the
  graph of a single fixed-point-free, 
   orientation-reversing diffeomorphism $\varphi :  \CP_1 \to \CP_1$.
   However, by construction,    $\tilde{N}= \Pi^{-1}(N )$ is  invariant under the 
holomorphic  involution 
 \begin{eqnarray*}
\varrho : \CP_1 \times \CP_1 &\longrightarrow & \CP_1 \times \CP_1\\
 (u,v) & \longmapsto & (v, u)
\end{eqnarray*}
and every  $(u ,\varphi (u))$ must therefore also be expressible as $(\varphi (v), v)$.
   Thus  $\varphi= \varphi^{-1}$, and   $\varphi^2 = \mbox{id}$. 
   By projection to the first factor, the action of the non-trivial deck transformation 
   on $\tilde{N}$ can therefore be identified with the action of $\varphi$
   on $\CP_1$,  and the 
   quotient $N$ of $\tilde{N}$ by 
this deck transformation can therefore  be identified with
  $\CP_1/\langle \varphi \rangle$. But 
 this is a smooth compact surface with  fundamental group $\ZZ_2$, and so  necessarily 
diffeomorphic to $\RP^2= \CP_1 / \langle {\mathfrak a} \rangle$. Lifting some  diffeomorphism
${\psi}_0: \CP_1 / \langle {\mathfrak a} \rangle \to \CP_1/\langle \varphi \rangle$ to an orientation-preserving
diffeomorphism  $\psi : \CP_1 \to \CP_1$ of universal covers now  shows that we must have
$$\varphi = \psi \circ {\mathfrak a} \circ  \psi^{-1}$$
for some orientation-preserving diffeomorphism $\psi$ of $\CP_1$.

This conclusion  can now be  reverse-engineered. Indeed, 
 given any orientation preserving diffeomorphism $\psi : \CP_1 \to \CP_1$, 
 the graph of the orientation-reversing involution 
$\varphi = \psi \circ {\mathfrak a} \circ  \psi^{-1}$
necessarily  projects, via  $\Pi$, to a  docile surface in $\CP_2$. 
However, the group of orientation-preserving diffeomorphisms
of $\CP_1$ is  connected, since it acts transitively on the (connected) space of conformal 
metrics on $S^2$, 
with (connected) isotropy subgroup  $PSL(2, \CC )$. 
It therefore follows that the space of docile surfaces
in $\CP_2$ is also connected.  In particular, any given 
docile surface $N$ may be smoothly deformed, via a family of docile surfaces, 
into the  standard $\RP^2\subset \CP_2$. 
    \end{proof}

\begin{lem} 
\label{secondo} 
Let $N\subset \CP_2$ be any docile surface. Then 
the homomorphism 
$H_2 (\CP_2 , N) \to  \ZZ$ given by homological intersection with 
$[{\mathcal Q}] \in H_2 (\CP_2 - N)$
is an isomorphism.  In  particular, $H_2 (\CP_2 , N) \cong \ZZ$. 
\end{lem}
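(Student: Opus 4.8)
The plan is to compute $H_2(\CP_2, N)$ using the long exact sequence of the pair and then identify the intersection map explicitly. By Lemma \ref{primo}, $N$ is diffeomorphic to $\RP^2$, so in particular $H_1(N;\ZZ) \cong \ZZ_2$ and $H_2(N;\ZZ)=0$. The relevant portion of the long exact sequence of the pair $(\CP_2, N)$ reads
\begin{equation*}
H_2(N) \to H_2(\CP_2) \to H_2(\CP_2, N) \to H_1(N) \to H_1(\CP_2).
\end{equation*}
Here $H_2(\CP_2) \cong \ZZ$, generated by the hyperplane class, $H_1(\CP_2)=0$, and $H_2(N)=0$. So the sequence collapses to a short exact sequence $0 \to \ZZ \to H_2(\CP_2, N) \to \ZZ_2 \to 0$. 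Thus $H_2(\CP_2, N)$ is an extension of $\ZZ_2$ by $\ZZ$, which a priori could be either $\ZZ$ (if the extension is non-split) or $\ZZ \oplus \ZZ_2$ (if split). Resolving this ambiguity is the main obstacle.

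The cleanest way to settle this, and simultaneously to pin down the intersection map, is to exploit the branched cover $\Pi: \CP_1 \times \CP_1 \to \CP_2$ and the antidiagonal from Lemma \ref{primo}. First I would observe that the antipodal map $\mathfrak a$, and hence the conjugate involution $\varphi$, has no fixed points, so $\tilde N = \Pi^{-1}(N)$ is a single $S^2$ (it is the graph of the diffeomorphism $\varphi$) which double-covers $N$ and is disjoint from the diagonal $\mathcal D = \Pi^{-1}(\mathcal Q)$. The point is that the generator of $H_2(\CP_2, N)\cong \ZZ$ should be represented by a holomorphic disk $\Delta$ whose boundary lies on $N$ and whose interior meets $\mathcal Q$ transversally in exactly one point; the class $[\mathcal Q]$ is $2$ times the hyperplane generator of $H_2(\CP_2)$. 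Upstairs, such a disk lifts to a sphere or disk meeting $\mathcal D$ once, which is compatible with the short exact sequence being non-split.

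The most efficient route, however, avoids guessing the extension and instead computes directly. I would use excision and the Thom isomorphism: since $N$ is disjoint from $\mathcal Q$, a tubular neighborhood argument gives $H_2(\CP_2, \CP_2 - \mathcal Q) \cong H^2(\mathcal Q) \cong \ZZ$ by Lefschetz duality, and intersection with $[\mathcal Q]$ is precisely the composite $H_2(\CP_2, N) \to H_2(\CP_2, \CP_2 - \mathcal Q) \cong \ZZ$. To show this composite is an isomorphism, I would build an explicit relative $2$-cycle: take the closure of one interior of a $\Pi$-image of a factor disk, or equivalently use the family of holomorphic disks described in the introduction, each of which meets $\mathcal Q$ once and has boundary on $N$. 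One such disk represents a generator $\alpha$ with $\alpha \cdot [\mathcal Q] = 1$, forcing the intersection homomorphism to be surjective; combined with the short exact sequence above, which shows $H_2(\CP_2, N)$ is generated by $\alpha$ together with the image of $H_2(\CP_2)=\ZZ\langle H\rangle$ where $H \cdot [\mathcal Q] = 2$ and $H = 2\alpha$ in the relative group, one concludes $H_2(\CP_2, N)\cong \ZZ$ generated by $\alpha$, and that intersection with $[\mathcal Q]$ is the identity isomorphism. The main obstacle throughout is verifying that such a disk $\alpha$ with the stated intersection number actually exists as a relative cycle on a general docile $N$; here the isotopy furnished by Lemma \ref{primo} is the key tool, since both $H_2(\CP_2, N)$ and the intersection pairing are deformation invariants, so it suffices to check everything for the standard $\RP^2$, where the linear disks and the tautological identifications make the computation transparent.
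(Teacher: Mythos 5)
Your proposal is correct, but it takes a genuinely different route from the paper's. The paper never touches the long exact sequence: it first upgrades Lemma \ref{primo} to produce a homeomorphism $\Psi:\CP_2\to\CP_2$ carrying $N$ to the standard $\RP^2$ \emph{and} $\mathcal Q$ to itself (obtained by descending $\psi\times\psi$ through the branched cover $\Pi$), which reduces both the group and the intersection map to the standard case in one stroke; there it uses the decomposition of $\CP_2$ as a union of a $2$-disk bundle $X\to\RP^2$ and a $2$-disk bundle $Y\to\mathcal Q$ glued along their boundaries, so that homotopy equivalence, excision, and Lefschetz--Poincar\'e duality give $H_2(\CP_2,\RP^2)\cong H^2(\mathcal Q)\cong\ZZ$ and simultaneously identify intersection with $[\mathcal Q]$ as the duality isomorphism---the extension problem you confront never arises. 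You instead run the exact sequence of the pair, face the ambiguity $\ZZ$ versus $\ZZ\oplus\ZZ_2$, and resolve it with an explicit relative cycle $\alpha$ (a hemisphere of the complexification of a real line) satisfying $\alpha\cdot[\mathcal Q]=1$; since the hyperplane class has $H\cdot[\mathcal Q]=2$, one gets $j_*H=2\alpha$, so the relative group is infinite cyclic on $\alpha$ and the intersection map is an isomorphism. Your version is more elementary and yields the concretely useful relation $j_*H=2\alpha$, but its cost lies in the reduction step, which you state only loosely (``deformation invariants''). To make it rigorous you need the isotopy extension theorem applied to the isotopy $N_t$ of Lemma \ref{primo}, plus the observation that every $N_t$ is docile and hence disjoint from $\mathcal Q$: the resulting ambient isotopies, viewed as maps of pairs $(\CP_2,N_0)\to(\CP_2,\CP_2-\mathcal Q)$, then form a homotopy, and homotopy invariance transports the composite $H_2(\CP_2,N_t)\to H_2(\CP_2,\CP_2-\mathcal Q)\cong\ZZ$ from the standard case to the general one. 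This is precisely where the paper's $\mathcal Q$-preserving homeomorphism is slicker, as it makes the compatibility of the reduction with the intersection pairing automatic rather than something to be checked.
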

\begin{proof} Let us first observe that 
there is a homeomorphism $\Psi: \CP_2\to \CP_2$
which sends $N$ to $\RP^2$,  and  $\mathcal Q$ to itself.
Indeed, let $\psi : \CP_1 \to \CP_1$
be a  diffeomorphism such that $\tilde{N}$ is the
graph of $\psi\circ {\mathfrak a} \circ \psi^{-1}$, and then define  
$\tilde{\Psi} : (\CP_1\times \CP_1) \to (\CP_1\times \CP_1)$ to be 
$\psi \times \psi$.  Then $\tilde{\Psi}$
send  $\bar {\mathcal D}$ to  $\tilde{N}$,  and 
${\mathcal D}$ to itself, while commuting 
 with the involution $\varrho$ given by $\varrho (u, v) \mapsto (v, u)$. 
Since $\varrho$  acts transitively on the fibers of $\Pi$, and 
since $\Pi :  \CP_1 \times \CP_1 \to \CP_2$ is a quotient map, 
there is a   unique homeomorphism $\Psi : \CP_2 \to \CP_2$ 
such that $\Pi \circ \tilde{\Psi} = \Psi \circ \Pi$. Moreover, this homeomorphism 
$\Psi$  then  sends $N$ to $\RP_2$,  and  $\mathcal Q$ to
  $\mathcal Q$, as promised.

Thus $H_2 (\CP_2 , N ) \cong H_2 (\CP_2  , \RP^2 )$, and  
we only really need  to check the claim when  
$N = \RP^2$. 
However,  $\CP_2$ is  the union of
a $2$-disk bundle $X\to \RP^2$ and a $2$-disk bundle 
$Y \to {\mathcal Q}$, identified along their  boundaries. (This 
 can  be checked by hand \cite{lmzoll}, 
but it can more elegantly be  deduced  \cite{grozi}  from the fact that there is 
a  cohomogeneity-one action of $SO(3)$ on $\CP_2$, 
with   exceptional orbits  $\RP^2$ and $\mathcal Q$.)  
Thus $H_2 (\CP_2 , \RP^2)\cong   H_2 (\CP_2 , X)$
by homotopy equivalence, whereas 
$H_2 (\CP_2 , X) \cong H_2 (Y , \partial Y )$ by excision, and 
$H_2 (Y,  \partial Y)\cong H^2 (Y)$ by Lefschetz-Poincar\'e duality. 
Since ${\mathcal Q}$ is a deformation retract of $Y$,
it   follows that $H_2 (\CP_2 , N )\cong H^2 ({\mathcal Q}) = \ZZ$. 
Moreover, since  $[\mathcal Q]$ generates $H_2 (Y)\cong \ZZ$, 
Lefschetz-Poincar\'e duality 
guarantees that homological intersection with  $[\mathcal Q]$ is an 
isomorphism $H_2(\CP_2, N)\to \ZZ$. 
\end{proof}

\begin{remark} The homeomorphism $\Psi$ constructed above 
will typically fail to be smooth along $\mathcal Q$. However, it is 
not difficult to modify $\Psi$ to  produce 
a self-diffeomorphism of $\CP_2$ with all the  properties in question. We leave this 
exercise as a challenge to the 
 interested reader. 
\end{remark}

\begin{lem}
\label{trio}
 Let $N \subset \CP_2$ be any docile surface, 
and let $\tilde{N}\subset \CP_1 \times \CP_1$ be its inverse image under 
$\Pi$. Then $\CP_1 \times \CP_1$ admits a $\varrho$-invariant 
K\"ahler metric $h$  for which  $\tilde{N}$ is Lagrangian. 
This metric can  be chosen so that its 
K\"ahler form $\omega$ represents $2\pi c_1 (\CP_1 \times \CP_1 )$
in deRham cohomology,  and  if  $N$ is smoothly varied
through a family of other docile surfaces, a corresponding family of such 
K\"ahler metrics  can moreover be chosen so as to depend smoothly on the given 
parameters. 
\end{lem}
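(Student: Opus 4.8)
The plan is to build the desired Kähler form as a sum of area forms pulled back from the two factors, exploiting the structure of $\tilde{N}$ established in Lemma \ref{primo}. Recall from that proof that $\tilde{N}$ is the graph of a smooth, fixed-point-free, orientation-reversing involution $\varphi: \CP_1 \to \CP_1$; equivalently, writing $\varpi_1, \varpi_2: \CP_1\times \CP_1 \to \CP_1$ for the two factor projections, the restriction $\varpi_1|_{\tilde{N}}$ is a diffeomorphism and $\varphi = \varpi_2 \circ (\varpi_1|_{\tilde{N}})^{-1}$, so that $\tilde{N}$ is parametrized by $\iota: z \mapsto (z, \varphi(z))$.

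First I would fix a round Fubini--Study area form $\sigma_1$ on the first factor, normalized so that $\int_{\CP_1}\sigma_1 = 4\pi$, and hence $[\sigma_1] = 2\pi c_1(\CP_1)$. I would then set $\sigma_2 = -(\varphi^{-1})^*\sigma_1$ on the second factor. The crucial observation --- and the one place where Lemma \ref{primo} is indispensable --- is that because $\varphi$ is orientation-reversing, $\sigma_2$ is a \emph{positive} area form; and on the Riemann surface $\CP_1$ every positive area form is automatically a Kähler form for the standard complex structure, since $\Lambda^{2,0}=\Lambda^{0,2}=0$ in complex dimension one. Consequently $\omega := \varpi_1^*\sigma_1 + \varpi_2^*\sigma_2$ is a genuine Kähler form on $\CP_1\times \CP_1$. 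A one-line computation then shows that $\tilde{N}$ is Lagrangian: pulling back along $\iota$ gives $\iota^*\omega = \sigma_1 + \varphi^*\sigma_2 = \sigma_1 - (\varphi^{-1}\circ\varphi)^*\sigma_1 = 0$. For the cohomology class, the change-of-variables formula together with $\deg\varphi = -1$ yields $\int_{\CP_1}\sigma_2 = 4\pi$, so $[\sigma_2] = 2\pi c_1(\CP_1)$ as well, whence $[\omega] = 2\pi c_1(\CP_1\times\CP_1)$.

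It remains to impose $\varrho$-invariance, which $\omega$ generally lacks because $\sigma_1 \neq \sigma_2$. I would simply average, replacing $\omega$ by $\omega_{\mathrm{sym}} := \tfrac{1}{2}(\omega + \varrho^*\omega)$. Since $\varrho$ is a biholomorphism, $\varrho^*\omega$ is again a positive $(1,1)$-form, so $\omega_{\mathrm{sym}}$ is Kähler; and since $\varrho$ fixes $c_1$, the class is still $2\pi c_1(\CP_1\times\CP_1)$. Because $\tilde{N}$ is $\varrho$-invariant (Lemma \ref{primo}) and Lagrangian for $\omega$, the identity $(\varrho^*\omega)|_{\tilde{N}} = (\varrho|_{\tilde{N}})^*(\omega|_{\tilde{N}}) = 0$ shows that it is Lagrangian for $\varrho^*\omega$, hence for $\omega_{\mathrm{sym}}$. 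The Kähler metric $h = \omega_{\mathrm{sym}}(\,\cdot\,, J\,\cdot\,)$ then has all the required properties.

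Finally, smooth dependence on parameters is immediate from the construction: if $N$, and hence $\tilde{N}$, varies smoothly, then $\varphi = \varpi_2\circ(\varpi_1|_{\tilde{N}})^{-1}$ varies smoothly, and with $\sigma_1$ held fixed the forms $\sigma_2$, $\omega$, and $\omega_{\mathrm{sym}}$ all depend smoothly on the given parameters. I do not expect any serious obstacle here; the only subtle point is the positivity of $\sigma_2$, which is precisely what the orientation-reversing conclusion of Lemma \ref{primo} guarantees.
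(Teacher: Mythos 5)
Your proof is correct and is essentially the paper's own argument, just reorganized: the paper symmetrizes first, setting $\check{\omega} = \tfrac{1}{2}(\alpha - \varphi^*\alpha)$ on a single $\CP_1$ and using the \emph{same} factor metric on both sides (which makes $\varrho$-invariance automatic), whereas you put $\sigma_1$ and $\sigma_2 = -\varphi^*\sigma_1$ on the two factors and average over $\varrho$ at the end --- and indeed your $\omega_{\mathrm{sym}} = \tfrac{1}{2}(\omega + \varrho^*\omega) = \varpi_1^*\bigl(\tfrac{\sigma_1 - \varphi^*\sigma_1}{2}\bigr) + \varpi_2^*\bigl(\tfrac{\sigma_1 - \varphi^*\sigma_1}{2}\bigr)$ coincides identically with the paper's K\"ahler form upon taking $\alpha = \sigma_1$. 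All the essential points --- positivity via $\varphi$ being orientation-reversing, the Lagrangian computation on the graph, the normalization $[\omega] = 2\pi c_1$, and smooth dependence on parameters --- match the paper's proof.
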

\begin{proof}
The construction is a variant of one used in \cite[Lemma 2]{lmew}. 
Express $\tilde{N}$ as the graph 
 of  a smooth  orientation-reversing involution  
 $\varphi : \CP_1 \to \CP_1$. Let $\alpha$ be the  area form of the standard
 unit-sphere metric on 
  $S^2=\CP_1$.  Then the  area form 
$$\check{\omega} = \frac{\alpha - \varphi^*\alpha}{2}$$
satisfies  $\varphi^*\check{\omega} = -\check{\omega}$, and 
is the K\"ahler form of a unique K\"ahler metric $\check{h}$ on $\CP_1$. 
We now define a K\"ahler metric
 on $\CP_1 \times \CP_1$ by setting 
 $$h=  \varpi_1^*\check{h} + \varpi_2^*\check{h}$$
where $\varpi_j  : \CP_1 \times \CP_1 \to \CP_1$, $j=1,2$,  are the factor projections. 
Since the restriction of the associated  K\"ahler form $\omega$
 to the graph of $\varphi$ is 
 $$
 (\mbox{id}\times \varphi)^* ( \varpi_1^*\check{\omega} + \varpi_2^*\check{\omega}) = 
 \check{\omega} + \varphi^*\check{\omega} =  \check{\omega}- \check{\omega}=0 , 
 $$
 the graph 
 $\tilde{N}$ is Lagrangian with respect to $\omega$. 
 Moreover, $h$ is invariant under the interchange of the 
 factors of $\CP_1\times \CP_1$, and the K\"ahler class
 of $h$ is obviously given by 
  $[\omega]=  2\pi c_1 (\CP_1 \times \CP_1)$, since $\omega$ has integral $4\pi$
  on either 
  factor $\CP_1$.  Finally, this construction can be uniformly applied 
  to a smooth family $N_t$ of docile surfaces by replacing 
  $\varphi$ with a corresponding of smooth family 
  $\varphi_t$ of smooth involutions,  and the  
 corresponding family 
 $h_t$  of K\"ahler metrics will then manifestly  depend smoothly on 
the  parameter  $t$. \end{proof}

\begin{defn}
 Let  $D^2$ denote 
the closed unit disk  in $\CC$, and let $Z$ be any complex manifold. 
A continuous map $f: D^2 \to Z$
will be called a {\em parameterized holomorphic disk} in $Z$
if $f$ is holomorphic in the open unit disk $\mathring{D}^2 = D^2 - \partial D^2$.
If, in addition, $f (\partial D) \subset W$ for a specified  subset
$W\subset Z$, we will sometimes say  that $f$ is a 
 {parameterized holomorphic disk} in $(Z,W)$. 
\end{defn}

\begin{prop} \label{discus} 
Let $N \subset \CP_2$ be any docile surface, and 
suppose that   $f$ is  a parameterized 
holomorphic disk in $(\CP_2 , N )$
whose relative homology class  $[f]$ generates $H_2(\CP_2, N) \cong \ZZ$. 
Then $f$ is a smooth embedding,  $f(D^2 )$ meets  $N$
only along 
$f(\partial D^2)$, and 
$f(D^2)$  meets $\mathcal Q$  transversely,  in a single point.
\end{prop}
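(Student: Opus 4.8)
The plan is to pull the disk back to $\CP_1 \times \CP_1$ via the branched cover $\Pi$, where the docile surface $\tilde N$ has the clean product structure established in Lemma \ref{primo}, and then to exploit the positivity/intersection theory of holomorphic curves. First I would understand the homology class $[f]$. By Lemma \ref{secondo}, the generator of $H_2(\CP_2, N) \cong \ZZ$ is detected by homological intersection with $[\mathcal Q]$, and the disk class has intersection number $\pm 1$ with $\mathcal Q$. So a disk generating $H_2(\CP_2,N)$ meets $\mathcal Q$ with algebraic intersection number one. Since $\Pi$ is ramified exactly over $\mathcal Q$ (to order two), a disk meeting $\mathcal Q$ transversally in one point lifts under $\Pi$ to a holomorphic object on $\CP_1 \times \CP_1$ whose boundary lies on $\tilde N = \Pi^{-1}(N)$; the single simple intersection with the branch locus $\mathcal Q = \Pi(\mathcal D)$ is exactly the data needed to make the lift a topological disk rather than two sheets.

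The core of the argument is then to analyze the lifted disk $\tilde f$ in $\CP_1 \times \CP_1$. Composing with the two factor projections $\varpi_j$ gives two ordinary holomorphic maps $D^2 \to \CP_1$ with boundary constrained by the graph structure of $\tilde N$. I would argue these component maps have degree one (i.e. each is a biholomorphism onto $\CP_1$ after capping, or realizes the disk as a graph), using the fact that $\tilde N$ is the graph of the fixed-point-free involution $\varphi$ together with a boundary-winding/argument-principle count: the homology class forces each projected boundary circle $\varpi_j \circ \tilde f(\partial D^2)$ to wind once around $\CP_1$, and a holomorphic disk whose boundary winds once must be an embedded holomorphic disk covering a hemisphere exactly once. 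This is where positivity of intersections with the vertical and horizontal factor lines $u=\text{const}$, $v=\text{const}$ comes in: transversality in the docility hypothesis guarantees these intersections are isolated and positive, pinning the degree and forcing embeddedness.

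Once I know $\tilde f$ is embedded with each projection a degree-one map, embeddedness of $f$ itself follows because $\Pi$ is injective away from $\mathcal Q$ and the disk crosses $\mathcal Q$ only once; that single crossing point is where the two would-be sheets are glued, so no self-intersection of $f$ is created. The statement that $f(D^2)$ meets $N$ only along $f(\partial D^2)$ is inherited from the corresponding fact upstairs: an interior point of $\tilde f$ mapping to $\tilde N$ would, by positivity of intersection of the holomorphic disk with the totally real (hence locally graph-like) surface $\tilde N$, contradict the boundary-only incidence that the graph structure enforces. Finally transversality and the single-point count at $\mathcal Q$ come directly from the intersection number $[f]\cdot[\mathcal Q]=1$ being realized by positive local intersections, so a single geometric point with local intersection multiplicity one, i.e.\ a transverse crossing.

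The hard part, I expect, will be controlling the \emph{boundary} behavior rigorously: the map $f$ is only assumed continuous up to $\partial D^2$ and holomorphic in the interior, with $f(\partial D^2) \subset N$ a totally real submanifold. Before I can invoke positivity of intersections and the clean winding-number count, I need elliptic boundary regularity for holomorphic disks with totally real boundary conditions (to upgrade continuity to smoothness up to the boundary, justifying that the boundary circle is an immersed curve with a well-defined winding number), and I need to rule out boundary bubbling or tangency of the boundary to the tangent lines (\ref{dual}) — which is precisely what the transversality clause in the definition of docile surface is there to prevent. Establishing this boundary regularity and transversality, so that the intersection-theoretic and degree arguments apply verbatim, is the technical crux.
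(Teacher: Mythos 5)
Your opening moves do match the paper's: boundary regularity for holomorphic disks with smooth totally real boundary (Alinhac--Baouendi--Rothschild, Chirka), the deduction from Lemma \ref{secondo} that $[f]\cdot[{\mathcal Q}]=1$, positivity of intersections of holomorphic curves forcing a single transverse intersection with $\mathcal Q$, and the branched lift $\tilde f$ to $\CP_1\times\CP_1$ with $\Pi(\tilde f(\zeta))=f(\zeta^2)$. But the core of your argument rests on two principles that are false or unavailable. First, there is no ``positivity of intersections'' between a holomorphic curve and a \emph{totally real} surface: such intersections can occur at isolated interior points and carry no sign (for instance, the complex line $z_2=iz_1$ in $\CC^2$ meets the totally real $\RR^2$ exactly at the origin), so no local argument can show that $f(\mathring{D}^2)$ avoids $N$; this disjointness is a genuinely global fact. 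Second, your plan to pin down the ``degree'' of the projections $\varpi_j\circ\tilde f$ by intersecting with factor lines does not work as stated: every factor line meets $\tilde N$ (each hits the graph of $\varphi$ exactly once), so intersection counts of a disk with boundary on $\tilde N$ against such lines are not deformation-invariant, and ``winding once around $\CP_1$'' is not meaningful since $\pi_1(\CP_1)=0$. The transversality clause in docility controls the boundary curve, not the interior intersection pattern.

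What your outline is missing is the paper's actual engine. Using the graph structure of $\tilde N$, the paper glues the two projections into a single map on the double $D^2\cup\overline{D^2}\approx S^2$: namely $F=\varpi_1\circ\tilde f$ on one hemisphere and $F=\varphi^{-1}\circ\varpi_2\circ\tilde f$ on the other, these agreeing on the equator precisely because $\tilde f(\partial D^2)$ lies on the graph of $\varphi$. This $F$ is quasiregular (dilatation bounded by that of $\varphi^{-1}$), so the measurable Riemann mapping theorem factors it as $F=G\circ H$ with $H$ a quasiconformal homeomorphism and $G$ holomorphic; then a symplectic-area computation --- using the K\"ahler form of Lemma \ref{trio}, for which $\tilde N$ is Lagrangian, to get $\int_{D^2}\tilde f^*\omega=4\pi$ --- forces $\deg F=1$, hence $G$ is a biholomorphism and $F$ is a homeomorphism. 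Injectivity of $F$ is exactly what delivers both conclusions you tried to obtain by positivity: the two hemisphere images are disjoint, which says precisely that $\tilde f(\mathring{D}^2)$ misses the graph $\tilde N$, and each projection is one-to-one. Even then one is not done: embeddedness still requires excluding boundary \emph{branch points} of $\varpi_j\circ\tilde f$, which the paper handles via the finite-vanishing-order theorem together with a wedge/break-angle argument showing that a boundary branch point of order $k\geq 2$ would force the two hemisphere images to overlap, contradicting injectivity of $F$. So the Lagrangian area bound and the quasiconformal doubling are not technical conveniences but the substance of the proof; your proposal identifies the right reduction to $\CP_1\times\CP_1$ but lacks the mechanism that makes the interior-disjointness, injectivity, and boundary-immersion claims true.
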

\begin{proof}
Because $f|_{\partial D^2}$ takes values in the totally real
submanifold $N$, and the latter is assumed to be a submanifold of class  $C^{\infty}$, the holomorphic map $f$ must actually be smooth up to the boundary 
\cite{alibaro,chirka}.  Since we have also assumed that  $[f]$ generates 
$H_2(\CP_2, N) \cong \ZZ$,
its homological intersection number with ${\mathcal Q}$ must be $1$ by 
 Lemma \ref{secondo}, and the disk $f$ can therefore  only geometrically 
intersect $\mathcal Q$ transversely, in a single point,  
 since every geometric  intersection of  distinct holomorphic curves makes a
 contribution
with   positive multiplicity toward  their total homological intersection number. 

Now, 
because $\Pi : \CP_1 \times \CP_1\to \CP_2$ is a $2$-to-$1$ branched cover,
ramified only  at $\mathcal Q$, path-lifting of the null-homotopic 
 circles $f(re^{2i\theta})$ in $\CP_2 - {\mathcal Q}$ allows us
to construct a continuous lift  $\tilde{f}: D^2 \to \CP_1\times \CP_1$
with $\Pi(\tilde{f}(\zeta))= f(\zeta^2)$. Since this lift is moreover holomorphic 
away from the origin, it then  follows that $\tilde{f}$ is also holomorphic across the 
origin  by the Riemann removable singularities theorem. We thus obtain a
parameterized 
holomorphic disk $\tilde{f}: D^2 \to \CP_1\times \CP_1$ which 
 sends $\partial D^2$ to $\tilde{N} = \Pi^{-1} (N)$ and 
$0$ to the unique   $\tilde{p}\in {\mathcal D}$ with $\Pi (\tilde{p})= p$, while 
 satisfying  $\tilde{f}(-\zeta) = \varrho (\tilde{f}(\zeta))$, where $\varrho$ is once again the 
 involution $(u,v)\mapsto (v,u)$ of  $\CP_1 \times \CP_1$.  Also note  that, 
 given any $\tilde{f}$ satisfying these properties, one may conversely  construct a 
 disk  $f$  in $\CP_2$ with the desired properties by setting 
 $f(\zeta ) = \Pi (\tilde{f}(\pm \sqrt{\zeta}))$, since  this well-defined 
 map is obviously   continuous,  and is holomorphic away from the
 origin. 
 
 Now, given a holomorphic disk $f$ representing the generator of 
  $H_2(\CP_2, N)$,  the  associated    disk $\tilde{f}$ will 
  then represent the generator 
  of $H_2 (\CP_1 \times \CP_1 , \tilde{N})\cong \ZZ$. 
  Indeed, since $\tilde{N}$ is homotopic to the anti-diagonal
  $\overline{\mathcal D}$ in $(\CP_1 \times \CP_1)- {\mathcal D}$, 
  the long exact sequence 
  $$\cdots \to H_2 (\tilde{N}) \to H_2 (\CP_1 \times \CP_1 ) \to H_2 (\CP_1 \times \CP_1 , 
  \tilde{N}) \to 
  H_1 (\tilde{N}) \to \cdots $$
  implies that the diagonal  class $[\mathcal D]$ will necessarily represent
   twice the generator of 
  $H_2 (\CP_1 \times \CP_1 ,  
  \tilde{N})$. However,  $\Pi (\mathcal D )$ is the degree-$2$ 
  holomorphic curve $\mathcal Q$, so 
  $\Pi_* : H_2 (\CP_1 \times \CP_1 , 
  \tilde{N}) \to H_2 (\CP_2 , N )$  is therefore  the homomorphism $\ZZ\to \ZZ$ given by multiplication by $2$. 
  But $\Pi_* ([\tilde{f}])= 2 [f]$ 
   by construction, so it  follows that $[\tilde{f} ]$ is indeed
   the generator of $H_2 (\CP_1 \times \CP_1 , \tilde{N})$, as claimed. 
   In particular, since $\tilde{N}$ is Lagrangian with respect the 
   K\"ahler form $\omega$ of the metric $h$ constructed in Lemma 
   \ref{trio}, we must have
   $$
   \int_{D^2} \tilde{f}^* \omega = \frac{1}{2} \int_{\mathcal D} \omega = 
    \frac{1}{2} \int_{\mathcal D} 2\pi c_1 =  \frac{1}{2} 2\pi (4) = 4\pi ~.
   $$
   
   Next, by making precise an argument
    previously sketched in  \cite[Lemma 3]{lmew}, we will show 
     that $\tilde{f}$ must be a topological 
 embedding. 
 Indeed, consider the abstract oriented  $2$-sphere obtained by taking the double
$D^2 \cup \overline{D^2}$, where the two copies of the disk are identified along the
boundary, $D^2$ is given the usual orientation coming from the
unit disk in $\CC$, and $\overline{D^2}$ is given the opposite orientation. 
Given $f$ as above, we can then construct a  continuous map 
$F: D^2 \cup \overline{D^2}\to \CP_1$, defined to equal 
$\varpi_1 \circ\tilde{f}$ on $D^2$, and to equal $\varphi^{-1}\circ \varpi_2 \circ\tilde{f}$
on $\overline{D^2}$. Note that  ${F}$ is 
actually smooth when  restricted to either $D^2$ or $\overline{D^2}$, and that
it is orientation-preserving at every regular point of
 the interior of either $2$-disk hemisphere. 
 The complex dilatation $\mu$ of $F$  is bounded by that of $\varphi^{-1}$, so $F$ is consequently 
quasi-regular in the sense of \cite{rickman}. However, 
via the measurable Riemann mapping theorem, 
this implies
 \cite[section VI.2.3]{lehvir} that $F= G\circ H$ 
for some  quasi-conformal homeomorphism $H: D^2 \cup \overline{D^2}\to \CP_1$
and some holomophic map
$G: \CP_1 \to \CP_1$.  

However,  equipping $\CP_1$ and $\CP_1\times \CP_1$ respectively with the K\"ahler forms
 $\check{\omega}$ and $\omega$ used in the proof of 
 Lemma \ref{trio}, we have 
$$\int_{D^2 \cup \overline{D^2}}F^*\check{\omega}=
\int_{D^2}\tilde{f}^*(\varpi_1^*\check{\omega}+ \varpi_2^* \omega_2) = \int_{D^2}\tilde{f}^*\omega
= 4\pi = \int_{\CP_1}\check{\omega}$$
and it therefore follows that the piece-wise smooth map ${F}$ has degree $1$. 
The holomorphic map $G: \CP_1 \to \CP_1$ is therefore a biholomorphism, and 
$F$ is therefore a quasi-conformal homeomorphism. 
In particular,  $\varpi_j\circ  \tilde{f}$ must be  a homeomorphism for $j=1,2$, 
and 
$(\varpi_1 \circ\tilde{f}) (\mathring{D}^2)$ must be disjoint from 
 $(\varphi^{-1}\circ \varpi_2 \circ\tilde{f})(\mathring{D}^2)$. In particular, 
 the graph $\tilde{N}$ of $\varphi$ is disjoint from $\tilde{f} (\mathring{D}^2)$, 
  and hence ${N}$  is disjoint from ${f} (\mathring{D}^2)$,  too. 

To finish the proof, 
we will now show that the smooth maps  
$$\varpi_j\circ  \tilde{f}: D^2 \to \CP_1, ~~ j=1,2,$$
 are  actually smooth embeddings. Of course, since we already know that they 
 are   homeomorphisms, and since they are manifestly 
 holomorphic on the interior $\mathring{D}^2$ of the disk, 
 it only remains to show that these maps are immersions along $\partial D^2$. 
However, since $\tilde{N}\subset \CP_1 \times \CP_1$ is a smooth, totally
real submanifold, and since $\tilde{f} (\partial D^2) \subset \tilde{N}$, it follows 
 \cite{alibaro} that 
the non-constant
holomorphic map $\tilde{f}$ cannot  be constant to infinite order at any 
boundary point. Moreover,  since $\tilde{N}$ is the graph
of a diffeomorphism $\varphi: \CP_1\to \CP_1$, neither of the factor maps  
$\varpi_j \circ \tilde{f}$, $j=1,2$,   can  be constant to infinite order
at a boundary point,  either. Thus, setting  $\push= \varpi_1 \circ \tilde{f}$, 
letting  $\zeta_0\in \partial D^2$ be any boundary point of the disk, 
and equipping $\CP_1$ with a local complex coordinate  $z$ centered at $\push(\zeta_0)$, 
there must be an integer $k\geq 1$ such that 
$$
\push(\zeta) =  a (\zeta-\zeta_0)^k + O(|\zeta-\zeta_0|^{k+1})
$$
for some $a\neq  0$, 
since the $C^{\infty}$ function $\push$ satisfies the Cauchy-Riemann
equations up to the boundary. If $k\geq 2$, it therefore follows  that,
in the fixed coordinate system, $\push(\mathring{D}^2)$ must 
contain a punctured half-disk
$$0< |z | \leq  \epsilon , ~~ \Re e (e^{-i\theta} z) \geq 0$$
because the boundary of a wedge-shaped subregion 
$$|\zeta -\zeta_0| <  \delta , ~~ \left|\arg (\zeta-\zeta_0) -\theta\right| < \frac{\pi }{3k}$$
of $D^2$ 
is mapped by the smooth homeomorphism $\push$ to a piece-wise $C^1$ Jordan curve with internal 
break-angle $4\pi/3 > \pi$ at $\push(\zeta_0)$. 
However, 
the same argument can also be applied
to the holomorphic map $\varpi_2 \circ \tilde{f}$; thus, if 
 $k\geq 2$, even after 
composing with  the orientation-reversing  diffeomorphism $\varphi^{-1}$,
the boundary 
of an analogous wedge-shaped region would still be sent to 
a piece-wise $C^1$ 
Jordan curve  whose internal break angle would have  absolute value $> \pi$ at the origin,
and    the image of  $\mathring{D}^2$ under the 
smooth map $\pull = \varphi^{-1} \circ \varpi_2 \circ \tilde{f}$
would  therefore also contain a punctured half-disk
$$0< |z | \leq \varepsilon , ~~ \Re e (e^{-i\vartheta} z) \geq 0 .$$
Since any two such punctured half-disks must  meet,  $k\geq 2$  thus  implies  
that 
$$
\push (\mathring{D}^2) \cap \pull (\mathring{D}^2) \neq \varnothing , 
$$
contradicting the  previously  
established fact that $F$ is injective. Hence $k=1$, and, since $z_0\in \partial
 D^2$ is arbitrary,  
 $\push$ and $\pull$ are both smooth embeddings  
$D^2  \hookrightarrow \CP_1$. 
The parameterized holomorphic disk $\tilde{f}$ in  $(\CP_1 \times \CP_1, \tilde{N})$ is therefore smoothly embedded, and the 
 parameterized holomorphic disk $f$ in  $(\CP_2, N )$ is therefore 
   smoothly embedded, too.  
   \end{proof}

If $L \to D^2$ is
a complex line bundle over the disk, and if $\ell \to S^1$ is a 
 real line sub-bundle  of $L|_{\partial D^2}$, 
 recall \cite{mcsalt} that the Maslov index  $\ind (L,\ell )$
 is  obtained by trivializing $L$,
viewing $\ell $ as a map $\partial D^2 \to \RP^1$, and  declaring  $\ind (L,\ell )$ to be the 
winding number of this map; the resulting integer  is 
independent of the trivialization of $L$, and  amounts \cite{lebrsb} to the first Chern class
of the double of $(L,\ell )$.  More generally, 
if $V\to D^2$ is a rank-$r$ 
complex vector bundle, and if ${\zap v} \to S^1$
is a rank-$r$ real sub-bundle of $V|_{\partial D^2}$, 
 the Maslov index  $\ind (V, {\zap v})$ is defined 
to be the Maslov index of the associated  line-bundle pair $(\Lambda^rV, \Lambda^r{\zap v})$.

If $ Z$ is a complex manifold
 and
$ W\subset  Z$ is a totally real submanifold, 
the {\em total Maslov 
index} of a parameterized  holomorphic disk ${\zap f}$ in $( Z, W)$
is defined 
 to be  $\ind ({\zap f}^*T Z, ({\zap f}|_{\partial D^2})^*T W)$. 
 If the disk happens to be embedded, with image $\Dye = {\zap f}(D^2)$, 
 we  also define the  
{\em normal Maslov index} of $\Dye$ 
to be  $\ind (N, {\zap n})$, where $N=T Z/T\Dye$ is the normal bundle of the disk, and 
${\zap n}= T W/T\partial  \Dye$ is the relative normal bundle of its boundary. 
 Since the Maslov index  is 
 additive,   in the sense that 
$$
\ind (V_1 \oplus V_2, {\zap v}_1 \oplus {\zap v}_2) = 
\ind (V_1, {\zap v}_1) + \ind (V_2, {\zap v}_2), 
$$
and because $\ind (TD^2 , T\partial D^2 )= 2$, 
 the total Maslov index of  
an embedded holomorphic disk  equals its normal Maslov
index plus two.

\begin{prop}\label{masl}
Let $N \subset \CP_2$ be a docile surface, and 
let  $f$ be  a parameterized 
holomorphic disk in $(\CP_2 , N )$
whose relative homology class  represents the  generator of  $H_2(\CP_2, N)$. 
Then  $f$ has  total Maslov
index $3$, and its image $f(D^2)$ has  normal Maslov index
$1$.
\end{prop}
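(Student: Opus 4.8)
The plan is to compute the total Maslov index and then read off the normal index at the very end. Since Proposition \ref{discus} guarantees that $f$ is a smooth embedding, the additivity of the Maslov index together with the identity $\ind (TD^2 , T\partial D^2 )= 2$ already shows that the total Maslov index of $f$ equals its normal Maslov index plus two. It therefore suffices to prove that the total Maslov index $\ind (f^*T\CP_2, (f|_{\partial D^2})^*TN)$ equals $3$, for then the normal Maslov index is automatically $3-2=1$.

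First I would exploit the fact that the total Maslov index is a homotopy invariant of the underlying pair of bundles, and so is unchanged if $f$ is deformed continuously through maps $(D^2,\partial D^2)\to (\CP_2, N')$ as $N'$ sweeps out a family of docile surfaces. By Lemma \ref{primo}, $N$ is isotopic to the standard $\RP^2\subset \CP_2$ through docile surfaces; extending this isotopy to an ambient isotopy $\Phi_t$ of $\CP_2$ and setting $f_t=\Phi_t\circ f$ produces exactly such a deformation. Note that this reduction uses only the continuous dependence of the bundle pair on $t$, and in no way requires the intermediate maps $f_t$ to be holomorphic; it reduces the computation to the case $N=\RP^2$.

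The decisive step is then to use the fact that the standard $\RP^2$ is the fixed-point set of the anti-holomorphic involution $\mathfrak c$ of (\ref{conjugate}). Given $f_1$ with $f_1(\partial D^2)\subset \RP^2$, I would form its double $\hat f: \CP_1\to \CP_2$, equal to $f_1$ on one hemisphere and to $\mathfrak c\circ f_1\circ(\text{complex conjugation})$ on the other; the real structure carried by $T\RP^2$ along the equator identifies the doubled bundle pair with $\hat f^*T\CP_2$, so that the total Maslov index equals $\langle c_1(\CP_2), [\hat f]\rangle$. Now the doubled class $[\hat f]\in H_2(\CP_2)\cong\ZZ$ depends only on the relative class $[f_1]$ through the natural doubling homomorphism $H_2(\CP_2,\RP^2)\to H_2(\CP_2)$, and $[f_1]$ is the generator; evaluating this homomorphism on the explicit generator represented by one half of a $\mathfrak c$-invariant projective line (whose double is manifestly that entire line) shows that $[\hat f]$ is the class of a line. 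Hence the total Maslov index is $\langle c_1(\CP_2), [\text{line}]\rangle = 3$, and the normal Maslov index of $f(D^2)$ is therefore $1$.

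The main obstacle is the bookkeeping in the doubling step: one must verify carefully that the glued bundle really is $\hat f^*T\CP_2$, so that its first Chern number computes the Maslov index with no spurious factor of two, and that the doubling map on relative homology is genuinely well defined. An alternative route, closer in spirit to the preceding lemmas, would instead work upstairs on $\CP_1\times\CP_1$: the lift $\tilde f$ satisfies $\int_{D^2}\tilde f^*\omega = 4\pi$ with $\tilde N$ Lagrangian, and deforming $\tilde N$ to the anti-diagonal $\overline{\mathcal D}=\mathrm{Fix}(\sigma)$ for an anti-holomorphic involution $\sigma$ and doubling there produces a sphere of bidegree $(1,1)$, so that $\ind(\tilde f)=\langle c_1(\CP_1\times\CP_1),(1,1)\rangle = 4$. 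Comparing this with $f$ through the Jacobian of $\Pi$, which vanishes simply along the ramification $\mathcal D$ that $\tilde f$ meets transversely once, together with the degree-two reparametrization $\zeta\mapsto \zeta^2$, yields $2\,\ind(f)=\ind(\tilde f)+2=6$ and hence $\ind(f)=3$ once again; in this second route the delicate point is precisely the $+2$ correction contributed by the branch point.
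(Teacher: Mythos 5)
Your proposal is correct, but it follows a genuinely different route from the paper's. The paper never leaves the branched-cover picture: it takes the lift $\tilde f$ to $\CP_1\times\CP_1$ from Proposition \ref{discus}, observes that $\tilde f(D^2)$ is the graph of a diffeomorphism between domains in $\CP_1$, so that its normal bundle is identified with $T\CP_1$ restricted to the first-factor projection, and then computes the \emph{normal} Maslov index of $\tilde f(D^2)$ directly as the winding number of $(i\gamma')^2$, where $\gamma$ is the boundary curve of that projection; this winding number is $2$, it halves to $1$ under the $2$-to-$1$ branched covering, and the total index $3$ is obtained at the end by adding $2$. You proceed in the opposite order (total index first, normal index by subtraction), and you compute the total index by (i) homotopy invariance of the Maslov index for continuously varying totally real boundary data, combined with Lemma \ref{primo} and the isotopy extension theorem, to reduce to $N=\RP^2$, and then (ii) the standard Schwarz-reflection identity for boundaries in the fixed locus of $\mathfrak c$, which gives $\ind = \langle c_1(\CP_2), [\hat f]\rangle$ with $[\hat f]$ the double of the generator, i.e.\ the class of a line, hence $3$. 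Both steps are sound; the one piece of bookkeeping you should make explicit is the sign: the generator meant here is the one meeting $\mathcal Q$ positively (automatic for a holomorphic disk), and this is preserved under your ambient isotopy because $\Phi_t^{-1}(\mathcal Q)$ remains disjoint from $N$, so the double is $+[\text{line}]$ rather than $-[\text{line}]$. What the paper's route buys is a deformation-free, purely local computation that exhibits the normal bundle geometrically --- exactly what feeds into the Fredholm-regularity argument of Theorem \ref{snap}, where the double of the normal bundle is identified with $\mathcal O(1)$; what your route buys is independence from the delicate boundary winding analysis, using only the relative homology class and soft topology. Your sketched second route, $2\ind(f)=\ind(\tilde f)+2=6$, is also correct --- the $+2$ is precisely the contribution of the simple zero of $\det d\Pi$ at the one transverse intersection of $\tilde f$ with $\mathcal D$, equivalently the twist by $\mathcal O(\mathcal D)$ relating $\det \Pi^* T\CP_2$ to $\det T(\CP_1\times\CP_1)$ --- and it is the closest of the three arguments to the paper's, though the paper phrases everything in winding numbers rather than Chern numbers of doubles.
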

\begin{proof}
Given such a disk $f$, let $\tilde{f} : D^2 \hookrightarrow 
\CP_1 \times \CP_1$ be the branched lifting constructed in 
the proof of Proposition 
\ref{discus}. 
Since  the embedded holomorphic disk  $\tilde{f}(D^2)$
is  the graph of a diffeomorphism between 
domains in $\CP_1$ with smooth boundary,  it is  transverse to the first factor of 
$T\CP_1 \times T\CP_1$, and 
 its normal bundle can therefore be identified with 
 the restriction of 
$T\CP_1$ to its first-factor projection $\push (D^2) \subset \CP_1$;
this  simultaneously identifies
 the relative normal bundle 
of the boundary  with $(J\circ \push_*) (T\partial D^2)$. 
Taking an affine chart that contains $\push(D^2)$,  setting $\gamma = \push|_{\partial D^2}$,
and systematically exploiting the  coordinate trivialization of 
$T\CC$,  
the normal Maslov index of $\tilde{f}(D^2)$  therefore equals  the 
winding number of 
$$(i\gamma^\prime)^2: S^1 \to \CC -\{0\}.$$
Since $\gamma$ is isotopic to the  standard circle  $e^{i\theta} \mapsto e^{i\theta}$
in $\CC$, 
$(i\gamma^\prime)^2$ is homotopic  to $e^{i\theta} \mapsto e^{2i\theta}$,
and  the normal Maslov index of $\tilde{f}(D^2)$  consequently  equals $2$. On the other hand, 
$\tilde{f}(D^2)$ is a $2$-to-$1$ branched over of $f(D^2)$, so 
the corresponding winding number  for $f$ is only half as large,  and 
the normal Maslov index of ${f}(D^2)$  therefore equals $1$. 
The  total Maslov index of $f$ is therefore 
 $3$, as claimed. 
\end{proof}

\begin{thm} \label{snap} 
Let $N \subset \CP_2$ be any docile surface, and let $p\in {\mathcal Q}$
be any point of the  reference conic (\ref{conic}). Then there is a 
holomorphic disk in $(\CP_2 , N)$ which passes through $p$
and represents the generator of $H_2(\CP_2, N) \cong \ZZ$. 
Moreover, this disk is unique, modulo reparameterizations. 
\end{thm}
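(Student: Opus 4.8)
The plan is to push the entire problem up to the branched double cover $\Pi\colon\CP_1\times\CP_1\to\CP_2$, where the boundary condition becomes a Lagrangian graph, and then to run the standard deformation theory of holomorphic disks using the Maslov index of Proposition \ref{masl} as the decisive input. First I would record the reduction already contained in Proposition \ref{discus}: via $\zeta\mapsto\pm\sqrt{\zeta}$, a generator disk $f$ in $(\CP_2,N)$ through $p$ is the same datum as a $\varrho$-equivariant generator disk $\tilde f$ in $(\CP_1\times\CP_1,\tilde N)$ with $\tilde f(0)=\tilde p$, the unique point of $\mathcal D$ over $p$; and reparameterizations match up under this correspondence. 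So it suffices to prove existence and uniqueness, modulo reparameterization, of such a $\tilde f$, and I would work entirely upstairs, where $\tilde N$ is a Lagrangian graph for the K\"ahler metric $h$ of Lemma \ref{trio}.

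Next I would set up the moduli space $\mathcal M$ of unparameterized generator disks in $(\CP_1\times\CP_1,\tilde N)$. The linearization of $\bar\partial$ along any such disk is a Cauchy--Riemann operator with totally real boundary condition, hence Fredholm, with index governed by the total Maslov index computed in Proposition \ref{masl}. Since the normal bundle is a line bundle of positive normal Maslov index, the normal operator has no cokernel, so $\mathcal M$ is a smooth manifold; a dimension count (downstairs, generator disks form a $5$-manifold from $n+\mu=2+3$, modulo the $3$-dimensional $\mathrm{Aut}(D^2)$) gives $\dim\mathcal M=2=\dim\mathcal Q$. For compactness I would invoke the a priori area identity $\int_{D^2}\tilde f^*\omega=4\pi$ from the proof of Proposition \ref{discus} together with Gromov compactness; the point is to exclude bubbling, since any sphere or disk bubble would carry positive $\omega$-area and decompose the homology class, which is impossible for the generator of $H_2(\CP_1\times\CP_1,\tilde N)$ given the area and Maslov budget. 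This makes $\mathcal M$ a compact $2$-manifold without boundary.

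Each disk meets $\mathcal Q$ transversely in exactly one point (Proposition \ref{discus}), giving a smooth evaluation map $\mathrm{ev}\colon\mathcal M\to\mathcal Q$. The normal operator has real index $2$ and vanishing cokernel, and a standard vanishing-order argument (as in the winding-number computations of Propositions \ref{discus} and \ref{masl}) shows that evaluation at the interior intersection point is an isomorphism on its $2$-dimensional kernel; hence $\mathrm{ev}$ is a local diffeomorphism. Being also proper, $\mathrm{ev}$ is a covering map, and since $\mathcal Q\cong\CP_1$ is simply connected, each component of $\mathcal M$ maps diffeomorphically onto $\mathcal Q$. Thus the number of disks through a given $p$ equals the number of components of $\mathcal M$. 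To pin this number to $1$, I would deform $N$ to the standard $\RP^2$ through docile surfaces using Lemma \ref{primo}; the parametrized moduli spaces assemble into a compact cobordism of coverings of $\mathcal Q$, so the sheet number is constant, and for the standard $\RP^2$ it equals $1$ because the disks are the explicit round family. This delivers existence and uniqueness simultaneously.

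The hard part will be the compactness claim, that is, the exclusion of bubbling in the Gromov limit. This is where the fixed-area identity must be combined with a careful enumeration of the relative homology classes and Maslov indices that could appear in a limit configuration, checking that positivity of $\omega$-area forbids every nontrivial decomposition of the generator; the equivariance under $\varrho$ and the constraint at $\tilde p$ must be tracked through this analysis as well. By contrast, the Fredholm and index ingredients are already supplied by Proposition \ref{masl}, and the final enumeration is supplied by the connectedness established in Lemma \ref{primo}, so once compactness is secured the remaining steps are routine.
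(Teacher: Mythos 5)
Your strategy is at bottom the same as the paper's: reduce to the branched double cover $\Pi$, get smooth two\--dimensional moduli from the Maslov-index computation of Proposition \ref{masl}, use Gromov compactness in $(\CP_1\times\CP_1,\tilde N,\omega)$, and transfer the disk count from the standard $\RP^2$ (where the disks are explicit) through the isotopy of Lemma \ref{primo}. Repackaging the endgame as ``the evaluation map $\mathcal M\to\mathcal Q$ is a proper local diffeomorphism, hence a covering, whose sheet number is a deformation invariant'' is a legitimate variant of the paper's continuity method, which runs existence forward along $N_t$ and uniqueness backward and amounts to the same degree count. (Two smaller points: the model count for $\RP^2$ is not automatic --- that \emph{every} generator disk belongs to the round family is proved in the paper by Schwarz reflection plus B\'ezout; and your parametrized cobordism over $t\in[0,1]$ needs the Moser/Weinstein step the paper uses to compare the varying pairs $(\omega_t,\tilde N_t)$ with a fixed symplectic model.)

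The genuine gap is the step you yourself flag as the hard one: exclusion of bubbling. Your proposed mechanism --- positivity of $\omega$-area plus an enumeration of relative homology classes and Maslov indices --- provably cannot work, because there is a degenerate configuration that passes every one of those tests. Write $A=[\CP_1\times \mathrm{pt}]$ and $B=[\mathrm{pt}\times\CP_1]$. Since $\tilde N$ is homologous to the anti-diagonal, the class $A-B$ dies in $H_2(\CP_1\times\CP_1,\tilde N)$, so the \emph{sphere} class $A$ maps exactly to the generator $G$ of the relative group; its area is $\int_A\omega=4\pi$, equal to the area of a lifted generator disk; and its index contribution $2\langle c_1,A\rangle=4$ equals the total Maslov index of that disk. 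Hence the stable map consisting of a factor sphere $\CP_1\times\{v_0\}$ together with a ghost (constant) disk attached at the one point where this sphere meets $\tilde N$ represents $G$ with precisely the right area and Maslov budget; geometrically it is the standard degeneration in which the boundary circle of the disk shrinks to a point and the disk closes up into a sphere. No enumeration of classes, areas, or indices rules this out. What kills it in the paper is exactly the $\varrho$-equivariance that your write-up relegates to bookkeeping: the Gromov limit of $\varrho$-invariant curves is $\varrho$-invariant, and since $\varrho_*A=B$, a limit containing a horizontal sphere would also contain a vertical one, of total area $8\pi>4\pi$. Equivalently, as the paper argues, $\varrho$-invariance forces the two factor projections of the limit to have equal area $2\pi$ each, so neither projection is onto, so the limit has no compact irreducible components at all and is a single $\varrho$-invariant disk. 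Until this equivariance argument is made the centerpiece of the compactness step, the proof does not close.
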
 
\begin{proof}
By Lemma \ref{primo}, there exists  a smooth family of docile surfaces 
 $N_t$, $t\in [0,1]$,  such that $N_1=N$, 
 and
 such that 
 $N_0$ is the standard  linear $\RP^2\subset \CP_2$.

For the docile surface $N_0=\RP^2$, we can construct such 
a disk by using the complex conjugation map $\mathfrak c$ defined by (\ref{conjugate}). 
Indeed, since ${\mathfrak c}$ acts freely on ${\mathcal Q}$, the points
$p$ and ${\mathfrak  c}(p)$ are distinct, and hence joined by a unique 
projective line $\CP_1\subset \CP_2$.  Since $p$ and 
${\mathfrak  c}(p)$ are interchanged by ${\mathfrak c}$, this projective line must 
be invariant under complex conjugation ${\mathfrak c}$, and  it is therefore
the complexification of a unique real projective line 
$\RP^1\subset \RP^2$.  This $\RP_1$ divides the 
complex projective line $\CP_1$ into two disks, exactly
one of which contains the given point $p\in {\mathcal Q}$;
and since this disk meets $\mathcal Q$ transversely in a single point,  
its relative homology class generates $H_2 (\CP_2 , \RP^2)$. 
Conversely, the only disk with these properties is this half-projective-line. Indeed, 
if $\Dye$ is a disk in $(\CP_2 , \RP^2)$ which 
passes through $p\in {\mathcal Q}$ and 
represents the  generator of  $H_2 (\CP_2 , \RP^2)$, 
then $\Dye \cup {\mathfrak c}(\Dye)$ is a holomorphic curve by the 
reflection principle, and, by B\'ezout's theorem,  has degree one because
it intersects  the conic ${\mathcal Q}$ in two points. Thus, any 
holomorphic disk representing the generator of  $H_2(\CP_2, \RP^2)$ 
is one hemisphere of a $\frak c$-invariant complex projective line, with boundary
a real projective line $\RP^2 \subset \RP^2$; and there
is exactly one such hemisphere containing  $p$. We have thus proved both existence and
uniqueness  for the ``model'' docile surface $N_0=\RP^2$. 

We will now apply the continuity method to obtain an appropriate
disk for each $t\in [0,1]$. Let ${\zap E}\subset [0,1]$ be the set of 
$t$ for which such a disk exists; our goal is to show that ${\zap E}=[0,1]$. 
 We already know that ${\zap E}\neq \varnothing$, since 
$0\in {\zap E}$. Because $[0,1]$ is connected,
it therefore suffices to show that  ${\zap E}$ is both open and closed. 

To show that ${\zap E}$ is open, we appeal to  the perturbation theory of 
holomorphic disks. Indeed, suppose that such a disk $\Dye$ exists for
a certain value $\tau$ of $t$. By Proposition \ref{discus}, $\Dye$  is 
smoothly embedded, and by Proposition \ref{masl}, its
normal Maslov index is $1$. The double of the normal 
bundle of $\Dye$, in the sense used in  \cite[Theorem 3]{lebrsb}, is therefore
the ${\mathcal O}(1)$ line bundle over $\CP_1= \Dye \cup \overline{\Dye}$, and, since
$H^1(\CP_1, {\mathcal O}(1))=0$, the disk $\Dye$ is Fredholm regular.
The moduli space $M_\tau$ of nearby holomorphic disks in 
$(\CP_2, N_\tau)$ is therefore smooth, with tangent space 
$$T_{\tiny \Dye}M_\tau=H^0_{\RR} (\CP_1 , {\mathcal O}(1))$$
where the right-hand-side denotes the real-linear 
subspace of  $H^0(\CP_1 , {\mathcal O}(1))$ consisting
of sections which are real along $\RP^1\subset \CP_1$. 
Now observe that evaluation at two distinct points $p,q\in \CP_1$ gives
rise to an isomorphism $H^0(\CP_1 , {\mathcal O}(1))\to \CC^2$, 
and that evaluation at a single point $p\in \CP_1-\RP^1$ therefore gives
us a real-linear isomorphism $H^0_{\RR} (\CP_1 , {\mathcal O}(1))\to \CC$, as
may be seen by setting $q=\bar{p}$. 
Since $\Dye$ is transverse to $\mathcal Q$ at  $p\in \mathring{\Dye}$, it follows that 
the map $\varkappa_\tau : M_\tau\to \mathcal Q$ obtained by 
sending a disk to its  intersection with  $\mathcal Q$ has maximal rank at
$\Dye$, and so is a  diffeomorphism between a neighborhood of $\Dye\in M_\tau$
and a neighborhood of $p\in {\mathcal Q}$. This shows that,  at least locally, 
$\Dye$ is   the only  disk in the family passing
through the chosen point $p\in {\mathcal Q}$. Moreover, the Fredholm regularity 
of $\Dye$ guarantees that, for all $t$ in a neighborhood of $\tau$, there
is a corresponding  $2$-parameter family $M_t$ of embedded holomorphic disks
in $(\CP_2, N_t)$, and the corresponding 
intersection map $\varkappa_t: M_t \to {\mathcal Q}$
is a local diffeomorphism onto a neighborhood of $p$. Consequently,  
for values of $t$ in an interval about 
 $\tau$,
there is a unique smooth family  $\Dye_t$ of   such  holomorphic disks  passing through $p$
with $\Dye_\tau=\Dye$. In particular, ${\zap E}\subset [0,1]$ is open. 

To show that ${\zap E}$ is closed, we will now use a Gromov compactness argument. Indeed, 
suppose that $t_j$ is a sequence of values of $t\in [0,1]$ for which there exist
a corresponding sequence of holomorphic disks $\Dye_{t_j}$ in 
$(\CP_2, N_{t_j})$, each of which intersects ${\mathcal Q}$ transversely
in the single  point $p$; moreover, suppose that the numbers
$t_j$ converge to some $\tau \in  [0,1]$. 
Let $\tilde{\Dye}_{t_j}= \Pi^{-1}(\Dye_{t_j})$ be the corresponding 
ramified lifts to $\CP_1\times \CP_1$. Each of these disks then meets the diagonal 
${\mathcal D}= \Pi^{-1}({\mathcal Q})$ transversely in the unique  point
$\tilde{p}$ corresponding to  $p$. Consequently, each one represents the  
generator of $H_2 (\CP_1\times \CP_1, {\mathcal D})$. For $t\in [0,1]$,  
we now endow 
$\CP_1 \times \CP_1$ with the smooth family of $\varrho$-invariant 
 K\"ahler forms $\omega_t$ constructed 
in Lemma \ref{trio}; 
this family  is chosen so that $\tilde{N}_t$ is Lagrangian with respect to 
$\omega_t$, and all these forms $\omega_t$ belong to 
  the same cohomology class, 
with respect to which each factor $\CP_1$'s has area $4\pi$. 
By Moser stablity and the Weinstein 
tubular neighborhood theorem for Lagrangian submanifolds, the triples 
 $(\CP_1 \times \CP_1, {\tilde{N}}_t , \omega_t)$ are thus all symplectomorphic
 to the standard model  $(\CP_1 \times \CP_1, {\tilde{N}}_0 , \omega_0)$,
allowing us to think of the $\tilde{\Dye}_{t_j}$ as being a sequence of disks 
 in  $(\CP_1 \times \CP_1, {\tilde{N}}_0 , \omega_0)$ which   are pseudo-holomorphic 
 with respect to a sequence of $\omega_0$-compatible  almost-complex structures. 
By Gromov compactness \cite{grofrau,groye}, this sequence therefore has a subsequence
which  converges   to a (possibly singular) pseudo-holomorphic curve
${\zap X}$ in the same relative homology class, where  ${\zap X}$ is a 
union of holomorphic $\CP_1$'s and at most one holomorphic 
disk in $(\CP_1\times \CP_1 , {\tilde{N}}_{\tau })$.  Moreover, since 
${\zap X}$  is the Gromov limit of a sequence of $\varrho$-invariant  curves through 
$\tilde{p}$, 
it must also be $\varrho$-invariant and pass through $\tilde{p}$. 
 However, by construction,
$$\omega_{\tau }= \varpi_1^* \check{\omega}_{\tau }+ \varpi_2^* \check{\omega}_{\tau }$$
for some area form $ \check{\omega}_{\tau }$ on $\CP_1$. Since 
${\zap X}$ is invariant under the factor-switching involution 
$\varrho$ and represents the generator in $H_2(\CP_1\times \CP_1 , {\tilde{N}}_{\tau })$, 
$$2\int_{\zap X} \varpi_1^*\check{\omega}_{\tau } = 
\int_{\zap X} \varpi_1^*\check{\omega}_{\tau }+
\int_{\zap X} \varpi_2^*\check{\omega}_{\tau } = 
\int_{\zap X} \omega_{\tau }= 4\pi . $$
The area, with multiplicities, of the projection of ${\zap X}$ to either factor will thus be 
$2\pi$, or in other words,   half the area of the entire sphere. In particular, 
 neither factor projection   ${\zap X}\to \CP_1$ can  be onto. Consequently, 
  ${\zap X}$ cannot have
any compact irreducible components, and is 
therefore a disk $\tilde{\Dye}_{\tau }$. Since $\tilde{\Dye}_{\tau }$
is $\varrho$-invariant, it must, moreover,  be the ramified  lift of a disk $\Dye_{\tau }$ in
$(\CP_2, N_{\tau })$. Because $\tilde{\Dye}_{\tau }$ meets the diagonal 
$\mathcal D$ transversely in the single point  
$\tilde{p}$, the disk $\Dye_{\tau }$ consequently meets $\mathcal Q$ 
transversely in the single point $p$. In particular,  $\Dye_{\tau }$
passes through $p$ and represents the 
 generator of 
 $H_2(\CP_2, N_{\tau })$. Thus $\tau  \in {\zap E}$, and  ${\zap E}$ is
 therefore closed. 
 
 Since  ${\zap E}\subset [0,1]$
  has now been shown to be  non-empty, open, and closed, the connectedness of 
  the  interval implies that 
  ${\zap E}=[0,1]$. In particular, $1\in {\zap E}$, so there
 exists a holomorphic disk in $(\CP_2 , N)$ which passes through $p$
and represents the generator of $H_2(\CP_2, N)$, as claimed. 
It only remains for us to show that this disk is in fact unique. 

To prove uniqueness, we apply the continuity method in reverse. Suppose that 
$\Dye^\prime$ is any holomorphic disk in $(\CP_2 , N)$ which passes through $p$
and represents the generator of $H_2(\CP_2, N)$. By the same disk-perturbation
argument as before, there is a unique smooth family $\Dye_t^\prime$, $t\in (\tau, 1]$,   of 
holomorphic disks in $(\CP_2, N_t)$, meeting $\mathcal Q$ transversely
in the single point $p$, such that   $\Dye_1^\prime=  \Dye^\prime$ and such that  
$\tau \in [0,1)$ is minimal. The above compactness argument then allows
one to construct a limit disk $\Dye_\tau$, and perturbations of this
disk allow one to extend the family across $t=\tau$. Since $\tau$ was taken
to be as small as possible, this is a contradiction unless $\tau=0$. We thus 
actually obtain such a family of disks $\Dye_t^\prime$ for $t\in [0,1]$. However, 
$\Dye_0^\prime$ is then a disk in $(\CP_2 , \RP^2)$ which meets $\mathcal Q$
transversely in the single point 
 $p$, and we have already observed that this implies that $\Dye_0^\prime$ is
one hemisphere of the projective line joining $p$ to ${\mathfrak c}(p)$. Thus, if
there were two disks $\Dye$ and $\Dye^\prime$ in $(\CP_2 , N)$,
each meeting $\mathcal Q$ transversely in the single point $p$, both could
be evolved backwards in $t$ to obtain the same disk. However, the process 
of evolving forward in $t$ starting with an initial disk at $t=0$ necessarily 
yields a unique final disk
when $t=1$, so it follows that  $\Dye^\prime=\Dye$.
We have thus succeeded in establishing uniqueness, and our proof
is therefore complete. \end{proof}

 For clarity's sake, it is worth emphasizing that the above perturbation argument is carried out
on the level of unparameterized embedded disks. For each unparameterized 
embedded disk, there is of course  an $SL(2, \RR)$'s worth of different parameterizations.
 The interested reader is invited to double-check our perturbation argument  
 using the more popular machinery of 
 parameterized disks and the  total Maslov index \cite{forst,mcsalt}.
From this perspective, the moduli space of parameterized disks in $(\CP_2 , N_t)$ 
 near a given one
 will  be $5$-dimensional, and the moduli space of parameterized
 disks passing through $p$ will be $3$-dimensional. One can furthermore 
 specialize the parameterization by requiring that $0\in D^2$ be 
 sent to $p$, and the resulting parameterized disk $f: D^2\to \CP_2$
 will then be unique modulo  rotations $f(\zeta) \rightsquigarrow f(e^{i\phi}\zeta)$.

\begin{thm} \label{crackle}
Let $N \subset \CP_2$ be any docile surface,
and let $M$ denote 
the moduli space of all holomorphic disks 
in $(\CP_2, N)$ which 
represent the generator of
$H_2 (\CP_2, N)\cong \ZZ$. Then $M$
is diffeomorphic to $S^2$. The interiors of
these disks foliate $\CP_2 - N$, and the intersection pattern 
of their boundaries defines a unique Zoll projective structure $[\nabla]$ 
on $M$. Moreover, the reference
conic $\mathcal Q$ induces a specific conformal structure $[g]$ 
on $M$, and there is a unique  $\triangledown \in [\nabla]$
which is a Weyl connection for the conformal class $[g]$. 
\end{thm}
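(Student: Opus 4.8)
The plan is to build the whole structure from the single bijection furnished by Theorem \ref{snap}. First I would package that theorem as a map $\varkappa\colon M\to\mathcal{Q}$ sending each disk to its unique transverse intersection with the conic, which exists by Proposition \ref{discus}. Theorem \ref{snap} says precisely that $\varkappa$ is a bijection, while the Fredholm analysis already carried out in its proof shows that $M$ is a smooth $2$-manifold with $T_{\Dye}M\cong H^0_{\RR}(\CP_1,\mathcal{O}(1))$ and that $\varkappa$ has maximal rank at every such disk (using normal Maslov index $1$ from Proposition \ref{masl}, so that $H^1(\CP_1,\mathcal{O}(1))=0$ gives Fredholm regularity). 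A bijective local diffeomorphism onto the connected surface $\mathcal{Q}$ is a diffeomorphism, so $M\cong\mathcal{Q}\cong S^2$.

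Next I would address the foliation. Assemble the universal family $\pi\colon\mathcal{U}\to M$, a smooth $D^2$-bundle over $S^2$, together with its evaluation map $\mathrm{ev}\colon\mathcal{U}\to\CP_2$; by Proposition \ref{discus} this sends $\partial\mathcal{U}$ into $N$ and the open part $\mathring{\mathcal{U}}$ into $\CP_2-N$. I would first check that $\mathrm{ev}$ is a local diffeomorphism along $\mathring{\mathcal{U}}$: the fibre direction maps isomorphically onto the tangent line of the disk, while the deformation directions map onto the normal line because evaluation of $H^0_{\RR}(\CP_1,\mathcal{O}(1))$ at an interior point is a real-linear isomorphism onto $\CC$, exactly as in the proof of Theorem \ref{snap}. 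I would then verify that $\mathrm{ev}|_{\mathring{\mathcal{U}}}$ is proper, by the now-familiar limiting argument: a convergent sequence of interior values has, by compactness of $\mathcal{U}$, a subsequential preimage limit $(x,\zeta)$, and since the limit point lies off $N$, the disk $\Dye_x$ meets $N$ only along its boundary forces $\zeta$ to be interior. A proper local diffeomorphism onto a connected manifold is a covering map; since $\CP_2-N\cong\CP_2-\RP^2$ deformation-retracts onto $\mathcal{Q}\cong S^2$ (from the handle decomposition used in Lemma \ref{secondo}) it is simply connected, while $\mathring{\mathcal{U}}$ is connected, so the covering is trivial. Hence $\mathrm{ev}|_{\mathring{\mathcal{U}}}$ is a diffeomorphism, and the disk interiors are disjoint and exhaust $\CP_2-N$. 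I expect this globalization --- upgrading the local diffeomorphism to a genuine bijection --- to be the main obstacle, since it is the one step where properness and the precise topology of $\CP_2-N$ must be combined.

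For the projective structure I would study the boundary fibration $\mathrm{ev}|_{\partial\mathcal{U}}\colon\partial\mathcal{U}\to N$ and the sets $\mathfrak{C}_z=\{x\in M : z\in\partial\Dye_x\}$. The same spanning property of the real sections of $\mathcal{O}(1)$, now restricted to $\RP^1\subset\CP_1$, together with motion along the boundary circle, shows that $\mathrm{ev}|_{\partial\mathcal{U}}$ is a submersion, so each $\mathfrak{C}_z$ is a closed $1$-manifold; properness and Ehresmann's theorem promote this to a fibre bundle over $N$, and following the construction through the isotopy of Lemma \ref{primo} back to the model $N_0=\RP^2$ --- where the $\mathfrak{C}_z$ are the great circles --- pins the number of components of each fibre at one, so every $\mathfrak{C}_z$ is a single embedded circle. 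That these circles are precisely the geodesics of a uniquely determined projective structure $[\nabla]$ on $M$, and that this structure is Zoll, is then the reconstruction established in \cite{lmzoll}, which I would invoke directly.

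Finally, for the conformal structure and the Weyl connection, I would identify $\CP_2-N\to M$ with the bundle of orientation-compatible complex structures on $TM$ --- the twistor fibration, whose open-disk fibres are the interiors $\mathring{\Dye}_x$ --- as in \cite{lmzoll}. Because every disk meets $\mathcal{Q}$ transversely in a single interior point, $\mathcal{Q}$ is a smooth section of this fibration, hence a field $x\mapsto J_x$ of complex structures, that is, an automatically integrable conformal structure $[g]$ on $S^2$. Since $\mathcal{Q}$ is moreover a holomorphic curve in the twistor space, the associated $J$ is parallel for a torsion-free connection lying in $[\nabla]$, and by the twistor correspondence of \cite{lmzoll} this connection is exactly a $[g]$-compatible Weyl connection $\triangledown\in[\nabla]$, unique because holomorphic sections of the twistor fibration correspond bijectively to such connections.
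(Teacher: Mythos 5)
Your first two steps are sound. The identification $M\cong{\mathcal Q}\cong S^2$ via $\varkappa$ is exactly the paper's argument, and your treatment of the foliation is a legitimate variant: where the paper lifts the evaluation map to the Melrose blow-up ${\mathcal B}$ of $\CP_2$ along $N$ and shows that the lift $\hat{\zap q}:{\mathcal F}\to{\mathcal B}$ is a covering map of compact, simply connected manifolds-with-boundary, you restrict to the interiors and substitute properness plus covering-space theory; both rest on the same local computation, namely that normal Maslov index $1$ makes the deformation sections look like $a+\bar{a}\zeta$, so that evaluation at an interior point has maximal rank.

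The genuine gap is the projective structure. What you actually establish is that each ${\mathfrak C}_z$ is an embedded circle in $M$; you then assert that the existence of a unique Zoll projective structure with these circles as geodesics ``is the reconstruction established in \cite{lmzoll}, which I would invoke directly.'' That reconstruction cannot be invoked as a black box here. In \cite{lmzoll}, the statement that the incidence curves ${\mathfrak C}_z$ are the geodesics of a projective structure is available either when the disk family is the one built from an already-given Zoll structure, or when $N$ is $C^{2k+5}$-close to the standard $\RP^2$ (the inverse-function-theorem result); for a general docile $N$ no Zoll structure exists a priori, and manufacturing one from the disk family is precisely the content of this step. Nor is it a formal consequence of having a two-parameter family of embedded circles: a path geometry on a surface is the geodesic family of a projective structure only under a nontrivial integrability condition (for $y''=F(x,y,y')$, that $F$ be cubic in $y'$). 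The paper supplies this integrability from complex geometry: it shows that the pull-back ${\zap b}^*$ on $(1,0)$-forms is injective even at boundary points of the blow-up, so that $\textcyr{D}=\ker{\zap q}^{1,0}_*\subset T_\CC{\mathcal F}$ is a rank-$2$ involutive distribution containing a real direction along $\partial{\mathcal F}$ together with the $(0,1)$-tangent spaces of the fibers of ${\zap p}$, and then applies \cite[Lemma 4.6]{lmzoll} to the double ${\mathcal F}\cup\overline{\mathcal F}$, exactly as in the proof of \cite[Theorem 4.7]{lmzoll}. This is exactly where the blow-up ${\mathcal B}$ that your foliation argument bypassed gets used, so the paper's conclusion cannot simply be grafted onto your setup. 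Two smaller points: your Ehresmann-plus-isotopy argument for connectedness of ${\mathfrak C}_z$ needs a parametric version of Theorem \ref{snap} (smooth dependence of the whole disk family on $t$), whereas in the paper connectedness is automatic once the ${\mathfrak C}_z$ are known to be geodesics; and your twistor description of the Weyl connection, while consistent with \cite{lmzoll}, also sits downstream of the missing construction of $[\nabla]$, for which the paper simply cites \cite[footnote 4, p.~514]{lmzoll}.
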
 
\begin{proof}
Theorem \ref{snap} asserts that, through each $p\in {\mathcal Q}$,
 there is a unique holomorphic disk representing the generator of 
 $H_2(\CP_2, N)$; moreover,  the proof  of this theorem shows 
 the map $\varkappa : M\to {\mathcal Q}$, obtained by  sending
 a disk to its intersection with the conic, to be a local diffeomorphism. 
Hence $\varkappa$ is actually a diffeomorphism, and 
 $M\approx {\mathcal Q}\approx S^2$. 
 
For each $x\in M$, let $\Dye_x\subset \CP_2$ be the 
embedded holomorphic disk it represents, and set
$${\mathcal F} = \{ (x,y) \in M  \times  \CP_2 ~|~ y\in \Dye_x\}.$$
Then the projection $(x,y) \mapsto x$ makes ${\mathcal F}$ into
a smooth family of disks ${\zap p} : {\mathcal F}\to M$.
Let ${\mathcal B}$ be the Melrose blow-up of $\CP_2$
along $N$, which  is the manifold-with-boundary obtained 
from $\CP_2$ by replacing each point 
of $N$ with the unit circle bundle in the normal bundle of $N$;
let ${\zap b} : {\mathcal B}\to \CP_2$ be canonical projection, 
which we shall call the {\em  Melrose blow-down}. 
Since each of the disks  $\Dye_x$ is smoothly embedded, with 
$\partial \Dye_x\subset N$, the tautological smooth projection
${\zap q}: {\mathcal F}\to \CP_2$ given by $(x,y)\mapsto y$ can 
be lifted to a smooth map $\hat{\zap q}: {\mathcal F}\to {\mathcal B}$,
with ${\zap q} = {\zap b}\circ \hat{\zap q}$,  by 
sending each boundary point to the radial derivative of ${\zap q}$ relative to 
the corresponding disk. 
However, 
since each disk $\Dye_x$ has normal Maslov index $1$, the
 normal bundle of $\Dye_x$ can be trivialized in such a manner that 
elements of the tangent space $T_xM$ are represented  
by  some holomorphic function $\varsigma : D^2\to \CC$ 
of the form 
$$
\varsigma (\zeta) = a + \bar{a} \zeta
$$
for $a\in \CC$ arbitrary. For $a\neq 0$, such a variation 
has a zero only at the boundary of the disk,
and at this zero, its radial derivative is non-zero. Thus the derivative of $\hat{\zap q}$ has maximal
rank everywhere. Since the manifolds-with-boundary in question are  compact
and connected, it follows that  $\hat{\zap q}$ is a covering map. However,
$\mathcal B$ diffeomorphic to the complement of a tubular neighborhood
of $N$,  and   ${\mathcal Q}\hookrightarrow {\mathcal B}$ is therefore 
a homotopy equivalence
by the proof of 
Lemma \ref{secondo}. Thus  ${\mathcal B}$ is simply connected, and 
 $\hat{\zap q}$ is therefore a diffeomorphism. In particular, it induces a diffeomorphism
 between the interiors of ${\mathcal F}$ and $\mathcal B$, so the interiors of 
 the disks $\Dye_x$
foliate $\CP_2 - N$, as claimed.

For
any $y\in {\mathcal B}$ and $z={\zap b}(y)$,  
consider the pull-back map ${\zap b}^*: \Lambda^{1,0}_z \to \Lambda^1_y\otimes \CC$. 
If $y$ is an interior point of $\mathcal B$, this is obviously injective, because 
${\zap b}$ is a local diffeomorphism near $y$. However, it remains injective even
if $y$ is a boundary point. To see this, notice that,  when $y\in \partial {\mathcal B}$, 
the kernel of the
 pull-back map $\Lambda^{1}_z \otimes \CC \to \Lambda^1_y\otimes \CC$
 is $1$-dimensional, and spanned by a real co-vector. Since $\Lambda^{1,0}_z\subset 
 \Lambda^{1}_z \otimes\CC$ contains no non-zero real vectors, it follows that 
 $\Lambda^{1,0}_z \to \Lambda^1_y\otimes \CC$ is injective, as claimed. 
Because ${\zap q} = {\zap b}\circ \hat{\zap q}$, the  annihilator of  ${\zap b}^*(\Lambda^{1,0})$
therefore corresponds, via the diffeomorphism $\hat{\zap q}$, to a
rank-$2$ sub-bundle $\textcyr{D} \subset T_\CC{\mathcal F}$, explicitly
given by the kernel of  ${\zap q}^{1,0}_* : T_\CC{\mathcal F} \to T^{1,0}\CP_2$.
This sub-bundle is involutive on the interior of ${\mathcal F}$, and so, by continuity,  is 
involutive even along $\partial \mathcal F$. However, since the derivative of 
${\zap b}$  has rank $3$ at every boundary point, the same is true of
${\zap q}$, and $\textcyr{D}= \ker {\zap q}^{1,0}_*$ therefore contains a
real direction at every point of $\partial {\mathcal F}$. It also contains the
$(0,1)$-tangent space of the fiber disks of ${\zap p} : {\mathcal F}\to M$, 
so we can apply 
\cite[Lemma 4.6]{lmzoll} to the double ${\mathcal F}\cup \overline{\mathcal F}$,
exactly as in the proof of \cite[Theorem 4.7]{lmzoll}. Thus, there is a unique 
projective connection $[\nabla]$ on $\mathcal M$ for which the closed curves
${\mathfrak C}_z$,
defined for  $z\in N$ by 
$${\mathfrak C}_z = {\zap p} ( {\zap q}^{-1} [\{ z\} ])
= \{ x\in \mathcal M~|~ z\in \partial \Delta_x\},$$
are the geodesics of $[\nabla]$. Moreover, since 
each disk-boundary $\partial\Delta_x$ 
is an embedded $S^1\subset N$, 
no fiber of ${\zap p}|_{\partial {\mathcal F}}$ can meet a fiber of 
${\zap q}|_{\partial {\mathcal F}}$ in more than one point. 
 Hence  the ${\mathfrak C}_z\subset {\mathfrak M}$
are all simple closed curves , and  
$[\nabla]$ is therefore a Zoll projective structure on $M\approx S^2$. 
Finally,  there is \cite[footnote 4, p. 514]{lmzoll}  a unique choice
of $\triangledown\in [\nabla]$ which is a Weyl connection for the 
conformal structure on $M$ induced by its identification with 
$\mathcal Q$. 
\end{proof}

\begin{thm} \label{pop}
Let $N \subset \CP_2$ be a docile surface,
and let $( [g], \triangledown)$ be the 
Weyl structure on 
$M\approx S^2$ whose existence is guaranteed  by Theorem \ref{crackle}. 
Then $\triangledown$ is the Levi-Civita connection 
of a Zoll metric $g\in [g]$  iff $N\subset \CP_2 - N$
is Lagrangian with respect to the sign-ambiguous
symplectic form $\Omega = \Im m \Upsilon$, where
$\Upsilon$ is defined  by equation (\ref{oops}). 
When this happens, there is a unique such $g$ whose
closed geodesics all have length $2\pi$. This normalization is moreover 
equivalent to requiring that
$(S^2, g)$ have  
total area   $4\pi$. 
\end{thm}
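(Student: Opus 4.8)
The plan is to treat the equivalence and the normalization separately. For the equivalence, I would first convert the metricity of $\triangledown$ into a curvature condition. Since $\triangledown$ preserves the conformal class $[g]$ produced by Theorem \ref{crackle}, I may fix a representative $g\in[g]$ and write $\triangledown g=\theta\otimes g$ for its Lee form $\theta$; then $\triangledown$ is the Levi-Civita connection of some metric in $[g]$ exactly when $\theta$ can be gauged away, i.e. when $\theta$ is exact. Because $M\approx S^{2}$ has $H^{1}(S^{2};\RR)=0$, this holds if and only if the Faraday (distance-curvature) $2$-form $F:=d\theta$ vanishes identically. Thus the first assertion is reduced to the claim that $F\equiv 0$ if and only if $N$ is Lagrangian for $\Omega$.

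This equivalence is precisely the ``clean'' part of the correspondence recalled in the introduction and established in \cite{lmzoll}; the genuinely new ingredient is only that Theorem \ref{crackle} now furnishes the Weyl structure $([g],\triangledown)$ for \emph{every} docile $N$, so that the comparison can be made globally. I would re-derive it locally. Using the identification from the proof of Theorem \ref{crackle} of $\CP_2-N$ with the bundle of $[g]$-compatible almost-complex structures over $M$, the conic $\mathcal Q$ becomes the section cut out by the conformal structure, $N$ becomes the boundary circle-bundle at infinity, and the fibre disks are the $\Dye_x$. A direct computation in affine coordinates $(\xi,\eta)$ on $\CP_1\times\CP_1$ gives $\Pi^{*}\Upsilon = 2i\,d\xi\wedge d\eta/(\xi-\eta)^{2}$, and from this one can write $\Omega$ explicitly on the disk bundle. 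Restricting its horizontal component to $N$ and invoking that the horizontal distribution of a Weyl connection is governed by $\theta$, one finds that under the geodesic correspondence $\Omega|_{N}$ is carried to a nowhere-vanishing multiple of $F$; the forward implication, already recorded in the introduction, serves to pin down the sign and normalization.

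The step I expect to be the main obstacle is exactly this last identification of $\Omega|_{N}$ with the transform of $F$, since it is where the ambient symplectic geometry of $\CP_2-\mathcal Q$ must be converted into the scale-geometry of $\triangledown$ on $M$. I would stress that the comparison must be strictly \emph{local}: the naive attempt to detect the Lagrangian condition by a period of $\Omega$ fails, because the disk $\tilde\Dye_x$ and its boundary are $\varrho$-invariant and $\varrho$ restricts to $\tilde N$ as an orientation-reversing involution with $\varrho^{*}\Omega|_{\tilde N}=-\Omega|_{\tilde N}$, so the $\Omega$-area enclosed by any $\partial\tilde\Dye_x$ vanishes identically regardless of docility. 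I would therefore carry out the computation pointwise in a conformal gauge near a point of $M$, cross-checking against the round model $N_{0}=\RP^{2}$ (where $\Omega|_{N_{0}}=0$ and $\triangledown$ is the round Levi-Civita connection) to fix the multiplicative constants.

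Finally I would settle the normalization. A Levi-Civita connection determines its metric up to a constant positive homothety, because its $SO(2)$ holonomy acts irreducibly on each tangent plane, so any $\triangledown$-parallel metric is a constant multiple of $g$; under $g\mapsto c^{2}g$ lengths scale by $c$ and areas by $c^{2}$. Since $[\triangledown]$ is a Zoll projective structure by Theorem \ref{crackle}, every $g$-geodesic is an embedded circle, and by the classical fact that on $S^{2}$ all geodesics closed forces a common length \cite{beszoll} these share a single length $L>0$; rescaling to $L=2\pi$ determines $g$ uniquely. By Weinstein's theorem \cite{beszoll}, a Zoll metric on $S^{2}$ all of whose geodesics have length $2\pi$ has total area exactly $4\pi$; as area and squared length scale in the same way, the length and area normalizations single out the same metric, which completes the argument.
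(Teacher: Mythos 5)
Your proposal is correct and follows essentially the same route as the paper: in both cases the equivalence between metrizability of $\triangledown$ and the Lagrangian condition ultimately rests on \cite[Theorem 4.8]{lmzoll} (your Lee-form reduction and sketched local computation being an optional gloss on that citation, and not something your argument actually depends on), and the length/area normalization is settled by Weinstein's theorem together with the observation that a Levi-Civita representative of a fixed Weyl structure is unique up to homothety. The only difference of detail is that the paper applies Weinstein's theorem \cite{weinstein} after invoking \cite[Theorem 2.15]{lmzoll}, which says the geodesic flow of $g$ is differentiably conjugate to that of the round metric, whereas you invoke the $S^2$ case of Weinstein's theorem directly (the Weinstein integer is $1$, being forced by the topology of the unit tangent bundle $\RP^3$) and then use the scaling argument to get the equivalence of the two normalizations; both versions are sound.
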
 
\begin{proof} By \cite[Theorem 4.8]{lmzoll}, the Lagrangian condition is
equivalent to the requirement that $\triangledown$ be 
the Levi-Civita connection of a  Zoll metric $g$. 
When this happens, $g$ is then of course  determined
up to an  multiplicative constant, since 
 $\triangledown$ and $[g]$ are already known. 
We can moreover fix this scaling constant by specifying the length of some
(and hence every)  closed geodesic. 
However, it is also known \cite[Theorem 2.15]{lmzoll} that the  geodesic flow of $g$ is 
differentiably conjugate
to that of the standard metric $g_0$ on $S^2$. Hence 
 Weinstein's theorem \cite{weinstein} 
predicts that the total area of $(S^2, g)$ will coincide with that
of  $(S^2, g_0)$ iff the closed 
geodesics of $g$ and $g_0$  have identical lengths. 
\end{proof}

Theorem \ref{crackle}  considerably clarifies
our understanding of the return journey from totally real
surfaces $N\subset \CP_2$ to Zoll projective structure $[\nabla ]$
on $S^2$.  As long as 
$N$ is docile with respect to some non-singular conic $\mathcal Q$, 
then there is a unique  Zoll projective structure $[\nabla ]$ on $S^2$ 
to which $N$ corresponds via  the construction of \cite{lmzoll}. 
However,  because  docility is an open condition, 
a surface $N$ which is docile with respect to a particular conic $\mathcal Q$ will
also be docile with respect to a $5$-complex-parameter family of nearby conics 
${\mathcal Q}^\prime$. Thus, every Zoll projective structure $[\nabla]$ 
arising from Theorem \ref{crackle} can actually be represented
by a $10$-real-parameter family of 
Weyl connections $\triangledown$, each compatible with a 
different conformal structure. However, moving  ${\mathcal Q}^\prime$
far enough will always result in  conics with respect to which a 
given  $N$ will fail to be docile. For this reason, docility  
is best understood in terms of   Weyl connections rather than projective structures.

One  motivation for the present article stems from  the fact that
a Weyl structure $(S^2, [g], \triangledown)$  with  
all geodesics geometrically closed may be treated as time-symmetric 
Cauchy data for a 
Lorentzian Einstein-Weyl structure
on $S^2 \times \RR$.  From the twistor perspective, the latter arises as the moduli space of 
holomorphic disks in $\CP_1\times \CP_1$ with boundaries in 
 $\tilde{N}= \Pi^{-1}(N)$. When $N$ is docile with respect to
 the branch locus ${\mathcal Q}$ of $\Pi$, our results from  \cite{lmew}
 guarantee  long-time existence of the solution, and allow
 one to directly interpret the orientation-reversing involution 
 $\varphi : \CP_1\to \CP_1$ in terms of the scattering of light-rays
  in the resulting space-time. In this context, it would be fascinating to obtain 
a better understand the scattering map $\varphi$ directly
  in terms of the initial-value surface. 
An attack on this initial-value problem by more direct methods might therefore 
 offer new insights into the geometry of Zoll manifolds. 
 
Of course, the most classical aspects of  our present subject pertain not to  Zoll 
projective structures, but rather to  Zoll metrics. Since 
the conformal structure is  intrinsically part of the geometry in this setting,  it is completely 
natural in this context  to fix a  conic  
${\mathcal Q}\subset \CP_2$ once and for all. Our previous work in 
\cite{lmzoll} showed that every  Zoll metric on $S^2$ gives rise to 
 a Lagrangian surface $N\subset \CP_2-{\mathcal Q}$, 
one might perhaps hope that this totally real submanifold would always turn
out to be docile with respect to $\mathcal Q$. 
However, our  own calculations have revealed that this   isn't even  true
in the axisymmetric case. 
Thus, while we  hope  that  the present paper offers some
interesting  advances in the theory of Zoll metrics, further new ideas and results will 
be needed in order, for example,   to determine whether the 
 space of Zoll metrics on $S^2$ is connected. 

\bigskip

 \begin{acknow}
 The first author   warmly  thanks Chris Bishop and Dennis Sullivan 
 for  their  helpful explanations of various relevant    aspects  of the 
 theory of   quasi-conformal mappings. 
 \end{acknow}

\bigskip

\vfill

\noindent
{\footnotesize \sc Authors' addresses:}

 \noindent
{\footnotesize Mathematics Department, SUNY, Stony Brook, NY 11794, USA\\
Mathematical Institute, 24--29 St. Giles, Oxford OX1 3LB, UK
}

  \end{document}